\documentclass[a4paper,12pt,leqno]{article}
\makeatletter
\usepackage{amsfonts,delarray,amssymb,amsmath,amsthm,a4,a4wide}
\usepackage{amsmath, euscript}
\usepackage[ansinew]{inputenc}

\newtheorem{theo}{Theorem}
\newtheorem{pro}{Proposition}[section]
\newtheorem{lem}[pro]{Lemma}

\newtheorem{coro}[pro]{Corollary}
\newtheorem{remark}[pro]{Remark}
\newtheorem{definition}[pro]{Definition}

\numberwithin{equation}{section}

\def\XXint#1#2#3{{\setbox0=\hbox{$#1{#2#3}{\int}$}
     \vcenter{\hbox{$#2#3$}}\kern-.5\wd0}}

\DeclareMathOperator{\supp}{supp}

\DeclareMathOperator{\curl}{curl}

\DeclareMathOperator{\dist}{dist}
\DeclareMathOperator{\mcr}{mcr}

\providecommand{\abs}[1]{\left\vert#1\right\vert}

\providecommand{\pnorm}[2]{\left\Vert#1\right\Vert_{L^{#2}}}
\providecommand{\pnormspace}[3]{\left\Vert#1\right\Vert_{L^{#2}(#3)}}
\providecommand{\pqnorm}[3]{\left\Vert#1\right\Vert_{L^{#2,#3}}}
\providecommand{\pqnormspace}[4]{\left\Vert#1\right\Vert_{L^{#2,#3}(#4)}}

\providecommand{\Rn}[1]{\mathbb{R}^{#1}}

\providecommand{\wnorm}[1]{\left\lvert \mspace{-1.8mu} \left\lvert
\mspace{-1.8mu} \left\lvert #1 \right\rvert \mspace{-1.8mu} \right\rvert
\mspace{-1.8mu} \right\rvert_{L^{2,\infty}}}

\def\dt{\partial_t}

\def\hal{\frac{1}{2}}

\def\vchi{\text{\large{$\chi$}}}

\def\({\left(}
\def\){\right)}
\def\1{\mathbf{1}}

\def\a{\alpha}

\def\B{{\mathcal{B}}}

\def\curl{{\rm curl\,}}

\def\dist{\text{dist}\ }
\def\div{\mathrm{div} \ }

\def\dr{\delta_{\mr}}
\def\dt0{{{\frac{d}{dt}}_{|t=0}}}

\def\F{{\mathcal{F}}}

\def\hal{\frac{1}{2}}

\def\loc{{\text{\rm loc}}}

\def\lti{L^{2,\infty}}

\def\l|{\left|}

\def\mr{\mathbb{R}}

\def\mz{\mathbb{Z}}
\def\nab{\nabla}

\def\np{\nab^{\perp}}

\def\Q{\mathcal{Q}}

\def\r|{\right|}

\def\sm{\setminus}

\def\UR{{\mathbf U_R}}


\def\w{{w_n}}
\def\Q{{\mathbf{P}_n^\beta}}
\def\Z{Z_n^\beta}

\title{Lorentz space estimates for the Coulombian renormalized energy}
\author{Sylvia Serfaty\footnote{Supported by an EURYI Award}\, and Ian Tice\footnote{Supported by an NSF Postdoctoral Research Fellowship}}

\begin{document}
\maketitle

\begin{abstract}
In this paper we obtain optimal estimates for the ``currents'' associated to point masses in the plane, in terms of the Coulombian renormalized energy of Sandier-Serfaty \cite{ss1,ss3}.  To derive the estimates, we use a technique that we introduced in \cite{st}, which couples the ``ball construction method'' to estimates in the Lorentz space $L^{2,\infty}$.
\end{abstract}

\section{Introduction}

 In \cite{ss1}, Sandier and the first author introduced a
Coulombian ``renormalized energy'' associated to a discrete set of points in
the plane via a vector field $j$.   The simplest setting is that of a vector field  $j: \Rn{2} \to \Rn{2}$ satisfying
\begin{equation}\label{eqj}
\curl j = 2\pi \nu - 1,  \qquad \div j=0
\end{equation}
in the sense of distributions, where $\nu$ has the form
\begin{equation*}
\nu= \sum_{p \in\Lambda} \delta_{p}\quad \text{ for
some discrete set }   \Lambda\subset\mr^2.
\end{equation*}
Then  for any non-negative and compactly supported function $\chi$ we define
\begin{equation}\label{WR}W(j, \chi) = \lim_{\eta\to 0} \(
\hal\int_{\mr^2 \backslash \cup_{p\in\Lambda} \bar{B}(p,\eta) }\chi |j|^2
+  \pi \log \eta \sum_{p\in\Lambda} \chi (p) \).
\end{equation}
The limit in the definition exists, as noted in \cite{ss1}.

The ``renormalized energy'' $W_U$ relative to a family of sets
$U = \{U_R\}_{R>0}$  in $\mr^2$ (for example, balls of radius $R$) is then defined
from this  by
\begin{equation}\label{WU} W_U(j)= \limsup_{R \to \infty}
\frac{W(j, \chi_{\UR})}{|\UR|} ,
\end{equation}
where
 $\chi_{\UR}$ denotes  non-negative  cutoff functions satisfying, for some
constant $C$ independent of $R$,
\begin{equation}
\label{defchi} \pnorm{\nab \chi_{\UR}}{\infty} \le C, \quad \supp(\chi_{\UR})
\subset \UR, \quad \chi_{\UR}(x)=1 \ \text{if } d(x, \UR^c) \ge
1.
\end{equation}

This function $W_U$  (we will most generally omit the $U$ subscript) was introduced in \cite{ss1}, where it was derived as a limiting interaction energy for vortices of Ginzburg-Landau configurations (in superconductivity). In this context, it can be viewed as a version of the renormalized energy of Bethuel-Brezis-H\'elein  \cite{bbh}, but  for an infinite number of points and an infinite domain. Independently of Ginzburg-Landau, it can be seen  as a Coulombian interaction energy for an infinite number of points in the plane, computed via a renormalization.  Many of its properties are stated in \cite{ss1}, and we refer the reader to that paper for more details. It is conjectured in \cite{ss1} that the minimum of $W$ is achieved when the set of points $\Lambda$ is a perfect hexagonal lattice in the plane with the suitable density; this corresponds to what is called  the Abrikosov lattice in the context of superconductivity.  In any case, $W$ is expected to measure the order and homogeneity of a point configuration $\Lambda$.

As mentioned in \cite{ss1}, this energy appears beyond the context of Ginzburg-Landau. In particular, in \cite{ss2,ss3}, Sandier and the first author explore the fact that $W$ also arises naturally in the context of  (the statistical mechanics of) log-gases and random matrices. This also led them to provide in \cite{ss3} a definition of a renormalized energy for the (logarithmic) interaction of points on the real line. That one-dimensional version of $W$  is computed by embedding the real line in the plane, changing the constant ``background charge'' from $1$ to $\dr$ -- where $\dr$ denotes the ``Dirac mass'' along the $x_1$-axis of the plane -- and computing the 2D renormalized energy. More precisely, one should replace \eqref{eqj} by
$$\curl j = 2\pi \nu - \dr, \qquad \div j =0 \quad \text{in} \ \mr^2$$
where $\nu =\sum_{p \in \Lambda} \delta_p$ for some discrete set $\Lambda \subset \mr\subset \mr^2$ and $\dr $ is the measure characterized by the fact that for any test function $\phi$,  $ \int_{\Rn{2}} \phi  d\dr  = \int_{\mr} \phi(x_1,0)\, dx_1.$  Then $W(j,\chi)$  and $W(j)$ are  defined through the same formulae \eqref{WR} and \eqref{WU}. In this 1D case, the minimum of $W$ is proven in \cite{ss3} to be  achieved by the perfect one dimensional ``lattice,'' i.e. the set $\frac{1}{2\pi}\mz$.

Here we will give a unified treatment of both cases by considering the more general setting of vector fields satisfying $\curl j=2\pi \nu -m$ and $\div j=0$,  where $m$ is a positive Radon measure that can only charge lines.

The main motivation for the present paper is to obtain optimal estimates that are needed in  \cite{ss2,ss3} for log-gases. Let us explain a bit further the context there. It consists in studying the behavior as $n \to \infty$ of the  probability law
\begin{equation}
\label{loi}
d\Q(x_1, \dotsc, x_n)=   \frac{1}{\Z} e^{-\beta \w (x_1, \dotsc, x_n)}dx_1 \cdots dx_n\end{equation} where $\Z$ is the associated  partition function, i.e. a normalizing factor such that $\Q$ is a probability, and
\begin{equation}\label{wn}
\w (x_1, \dotsc, x_n)= -  \sum_{i \neq j} \log |x_i-x_j| +n  \sum_{i=1}^n V(x_i).
\end{equation}
The points $x_1, \dotsc, x_n$ belong either to the real line (1D log-gases) or to the plane (2D log-gases), and $V$ is some potential with sufficient growth at infinity, typically $V(x)=|x|^2$. In the context of statistical mechanics, the parameter $\beta$ is the inverse of a temperature. The particular cases of $\beta=1,2,4$ with $V$ quadratic also correspond to random matrix models (for more details the reader can consult e.g. \cite{forrester}).

One can observe that $\w$ and $W$ have a similar logarithmic flavor.
In \cite{ss2,ss3}, the main  point is  to explicitly  connect them in the limit $n \to \infty$ and to exploit this connection to deduce estimates on the probability of some events happening. Heuristically, the idea is that $W$ quantifies the order or heterogeneity of a configuration of points in the line or the plane, and that configurations of points with large $W$  have a probability $\Q$  of arising that decays exponentially  as $n \to \infty$.

 To obtain optimal rates on this decay, it turns out that we need to know how $W$ controls $j$ in an
optimal manner. In practice, it suffices to work with the  local version
$W(j, \chi)$, defined by \eqref{WR}, where $\chi$ is a cutoff function. We wish to obtain a control of $j$ by  the   number of points in $\Lambda$, say $n$, via $W(j,\chi)$.  The optimal estimate that we will obtain here, roughly $\|j\|_{L^p} \le C n^{1/p}$ for $1\le p<2$, will be used crucially in \cite{ss2,ss3}.

A weaker control than needed was already established in \cite{ss1},
Lemma 4.6:
\begin{lem}[\cite{ss1}]\label{wlp} Let $\chi$ be a smooth, non-negative function compactly supported in  an open set $U$ of the plane,  and assume that \eqref{eqj} holds in $\widehat U := \{x \mid d(x,U)<1\}$, where $\nu = 2\pi \sum_{p\in\Lambda} \delta_p$ for some finite subset $\Lambda \subset \widehat U$.  Then for any $p\in[1,2)$,
\begin{equation}\label{estss1}
 \int_{U} \chi^{p/2} |j|^p \le C
(|U|+C_p)^{1-p/2}\(W(j,\chi)+ n(\log n+ 1)
\pnorm{\chi}{\infty}+ n\pnorm{\nab\chi}{\infty}\)^{p/2},
\end{equation} where $n = \nu(\widehat U)/2\pi = \#\Lambda$, $C>0$ is a universal constant, and $C_p>0$ a constant depending on $p$.
\end{lem}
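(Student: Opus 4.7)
The plan is to split the domain of integration into a small neighborhood of the vortex set $\Lambda$ and its complement, treating each piece with a different tool. On the complement I would use Hölder's inequality to pass from $L^p$ to $L^2$, which matches the natural energy scale of $W$. On the small neighborhoods, I would exploit that $j$ has only $|x-p|^{-1}$ singularities, which are $L^p$-integrable for $p<2$.

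Concretely, fix a parameter $\eta>0$ to be tuned later, set $\Omega_\eta := \bigcup_{p\in\Lambda} B(p,\eta)$, and write $\int_U \chi^{p/2}|j|^p = \int_{U\setminus\Omega_\eta}(\chi|j|^2)^{p/2} + \int_{\Omega_\eta\cap U}\chi^{p/2}|j|^p$. For the first term, Hölder with exponents $2/p$ and $2/(2-p)$ yields
\begin{equation*}
\int_{U\setminus\Omega_\eta}(\chi|j|^2)^{p/2} \leq \left(\int_{U\setminus\Omega_\eta}\chi|j|^2\right)^{p/2} |U|^{1-p/2},
\end{equation*}
and since $\chi$ is supported in $U$, the very definition \eqref{WR} of $W$ gives the pre-limit identity
\begin{equation*}
\tfrac{1}{2}\int_{U\setminus\Omega_\eta}\chi|j|^2 = W(j,\chi) + \pi|\log\eta|\sum_{p\in\Lambda}\chi(p) + o(1) \quad\text{as }\eta\to 0,
\end{equation*}
so for $\eta$ small this is bounded by $W(j,\chi) + \pi n|\log\eta|\pnorm{\chi}{\infty} + C$. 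For the second term, near each vortex $p\in\Lambda$ the relations $\curl j = 2\pi\delta_p-m$ and $\div j = 0$ force the decomposition $j(x) = (x-p)^\perp/|x-p|^2 + J_p(x)$, with $J_p$ locally bounded in $L^\infty$ by the local $L^2$ energy via elliptic estimates on the associated stream function; integrating the singular part then gives $\int_{B(p,\eta)}|j|^p \leq C\eta^{2-p}/(2-p) + O(\eta^2)$, so summing over $p\in\Lambda$ the inside contribution is at most $Cn\pnorm{\chi}{\infty}^{p/2}\eta^{2-p}/(2-p)$.

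I would finish by optimizing with $\eta\sim 1/n$, which turns $n|\log\eta|$ into the target $n\log n$, while the inside contribution becomes $Cn^{p-1}\pnorm{\chi}{\infty}^{p/2}/(2-p) \leq C(n\pnorm{\chi}{\infty})^{p/2}/(2-p)^{p/2}$, since $p-1<p/2$ for $p<2$. Applying the concavity bound $A^{p/2}+B^{p/2}\leq 2(A+B)^{p/2}$ and absorbing factors of $1/(2-p)$ into $C_p$ then delivers \eqref{estss1}. The main obstacle is the quantitative control of the correction $J_p$ near each vortex, which requires the vortices to be well separated at scale $\eta$; this is typically enforced by first running a Sandier-style ball-construction at scale $\eta$ to merge clusters of vortices into a few larger balls of total radius $\lesssim n\eta$, and checking that this merging costs no more than the $n\pnorm{\nab\chi}{\infty}$ term already present in the statement (coming from the commutator between $\chi$ and the cutoff around the merged balls).
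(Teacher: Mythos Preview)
The paper does not prove this lemma: it is quoted from \cite{ss1} (Lemma~4.6 there) as motivation, and no argument is given in the present paper. There is therefore nothing to compare your proposal against directly. What the paper \emph{does} prove is the sharper Corollary~\ref{main_cor}, via the ball construction of Proposition~\ref{boulesren} together with the auxiliary field $G$: one obtains $\int_U \chi|j-G|^2 \lesssim W(j,\chi)+n\log n\,\pnorm{\chi}{\infty}+n\pnorm{\nabla\chi}{\infty}$ (this is essentially \eqref{circle5}, before the refinement of Step~4), and then handles $G$ separately. Adapting that route with a crude $L^p$ bound on $G$ in place of the Lorentz estimate would recover the lemma; this is quite different from your decomposition.

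On your proposal itself: the ``outside'' part is correct and is exactly how the definition of $W$ should enter. The genuine gap is in the ``inside'' estimate. You write $\int_{B(p,\eta)}|j|^p \le C\eta^{2-p}/(2-p)+O(\eta^2)$, but the $O(\eta^2)$ hides a factor $\|J_p\|_{L^\infty}^p$, and you have not explained how this is controlled by $W$, $n$, $\pnorm{\chi}{\infty}$, or $\pnorm{\nabla\chi}{\infty}$. The phrase ``bounded by the local $L^2$ energy via elliptic estimates'' is the crux: $j$ is not in $L^2$ near $p$, so the relevant $L^2$ quantity must live on an annulus $B(p,2\eta)\setminus B(p,\eta)$, and to sum those contributions you need the annuli to be disjoint, i.e.\ the vortices separated at scale $\sim\eta$. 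When they are not, your proposed remedy --- merge clusters via a ball construction --- does not close the argument as stated: after merging, a grown ball $B$ may contain many vortices, and you still need an \emph{upper} bound on $\int_B|j|^p$, whereas the ball construction supplies only \emph{lower} bounds on $\int|j|^2$ over annuli. In short, the singular/regular splitting near each $p$ is the right intuition, but turning it into a quantitative estimate uniform in the configuration is precisely where the work lies, and is what the field $G$ in Proposition~\ref{boulesren} is designed to do.
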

Here the number of points in the region $U$ is $n$, and typically
the volume of $U$ is proportional to $n$ and the value of
$W(j,\chi)$ also grows like $n$. The estimate \eqref{estss1} then provides (roughly) the bound $\pnormspace{j}{p}{U} \le C n^{1/p} (\log n)^{1/2}$. This is not optimal; our goal here is to remove the  $(\log n)^{1/2}$ term to obtain the optimal estimate in $n^{1/p}$. This will be achieved by employing a modification of the method we introduced in \cite{st}, which uses the Lorentz space $L^{2,\infty}$ in conjunction with the ``ball construction method'' \`a la  Jerrard \cite{je} and Sandier \cite{sa}.

A definition of the norm in the Lorentz space $\lti$ is
\begin{equation}\label{deflor}
\|f\|_{\lti} = \sup_{|E|<\infty} |E|^{-\hal} \int_E |f(x)|\,
dx.\end{equation}
We will come back to this in Section \ref{seclorentz}.  For more information on Lorentz spaces, we refer to the book \cite{grafakos}.

The reason to use the Lorentz space $\lti$  is as follows.  When $j$ solves \eqref{eqj}, it is equal to $\np H $ for some $H$ that has a logarithmic behavior near each $p
\in \Lambda$ (recall the space dimension is 2). Thus $|j|$ behaves
like $\frac{1}{|x-p|}$ near each $p\in \Lambda$.  This barely fails
to be in $L^2$ (hence the need for the  renormalization in the definition
\eqref{WR}, achieved by cutting out small holes around each $p$); however, it is in the Lorentz space $\lti \supset L^2$, which has the same scaling homogeneity as $L^2$.  We can thus hope for an estimate like $\|j\|_{\lti}\le Cn^{1/2}$, which will yield as corollaries the desired estimates without the $(\log n)^{1/2}$ error in all spaces into which $\lti$ embeds, such as $L^p$ for $1\le p< 2$ (the norms are over sets of finite measure).

\bigskip

Let us now give the complete result we obtain. As already mentioned, we consider open sets $U \subset \Rn{2}$ and vector fields $j: U\to \mr^2$ satisfying
\begin{equation}
\label{curljeq}
\curl  j =2\pi \nu - m, \quad \div j=0  \qquad \text{in } \ U,
\end{equation}
where $\nu = \sum_{p\in\Lambda} \delta_p$ for some finite subset $\Lambda \subset U$, and $m$ is  a positive Radon measure satisfying the following property: there exists $M>0$ such that
\begin{equation}\label{density}\forall 0<r<1, \forall x \in U\quad
m(B(x,r))\le \pi M r.
\end{equation}
Then Theorem 6.9 of \cite{mattila}, for example, implies that $m(A) \le \pi M \mathcal{H}^1(A)$ for every set $A$, where $\mathcal{H}^1$ is the $1-$dimensional Hausdorff measure.  This means that, while $m$ can concentrate, it can only do so on sets of Hausdorff dimension greater than or equal to one.

For any such $j$ and any function $\chi\ge 0$, we define $W(j,\chi)$ according to the formula \eqref{WR}, where the limit still exists.  Our main result is

\begin{theo}\label{main}
Let $\chi$ be a smooth, non-negative function  compactly supported in  an open set $U \subset \Rn{2}$,  and assume that $\curl j = 2\pi \nu - m $, $\div j=0$ in $\widehat U := \{x\mid d(x,U)<1\}$, where $\nu = \sum_{p\in\Lambda} \delta_p$ for some finite subset $\Lambda$ of $\widehat U$, and $m$ is a positive Radon measure satisfying \eqref{density}.  Then there exists an explicitly constructed
vector field  $G$ in $\widehat U$ satisfying the following.
\begin{enumerate}
\item $\|G\|_{\lti(\widehat U)}^2 \le C n$, where $C$ is universal,
\item  for any $\beta>0$, there exists $C_\beta>0$ depending only on $\beta$ and $M$, such that
\begin{equation}
\label{jW} \hal \int_U \chi  |j- G|^2 \le  (1+\beta) W(j, \chi) + C_\beta
n( \pnorm{\chi}{\infty} + \pnorm{\nab \chi}{\infty}) + C_\beta n' \log n'
\end{equation}where $n = \nu(\widehat U) = \#\Lambda$, and
$n'= \#\{p\in\Lambda\mid B(p,\hal)\cap \{0< \chi \le \hal \pnorm{\chi}{\infty} \} \neq\varnothing\}$.
\end{enumerate}
\end{theo}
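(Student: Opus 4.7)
The plan is to design $G$ so that it absorbs the singular behavior of $j$ near the vortex set $\Lambda$, leaving $j-G$ in $L^2_{\loc}$ while $G$ itself sits in $\lti$ with square norm controlled by $n$. First I would apply a Jerrard--Sandier ball construction to $j$, starting from tiny disks centered at each $p\in\Lambda$ and growing them until a family of pairwise disjoint closed balls $\{B_i=B(a_i,r_i)\}_{i\in I}$ covering $\Lambda$ is obtained, with total radius $\sum_i r_i\le r_0$ for an arbitrarily small parameter $r_0=r_0(\beta)>0$. Each $B_i$ contains $d_i$ points of $\Lambda$, and, thanks to \eqref{density}, the measure $m$ contributes only $O(Mr_i)$ to the annular lower bound for $\int_{B_i\setminus\cup_p B(p,\eta)}|j|^2$, so the usual near-optimal Jerrard--Sandier estimate $\ge 2\pi d_i^2\log(r_i/(d_i\eta))$ transfers to our setting with controlled error. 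Then set
\[
G(x):=\sum_{i\in I}\varphi_i(x)\sum_{p\in\Lambda\cap B_i}\np\log|x-p|,
\]
where $\varphi_i$ is a smooth cutoff equal to $1$ on $B_i$ and supported in a slight enlargement $\widetilde B_i\subset\widehat U$, with the $\widetilde B_i$ still pairwise disjoint. Inside $B_i$ one then has $\curl G=2\pi\sum_{p\in\Lambda\cap B_i}\delta_p$, so $\curl(j-G)=-m$ there and $j-G\in L^2(B_i)$; outside $\bigcup_i\widetilde B_i$, $G$ vanishes.

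To control $\pqnormspace{G}{2}{\infty}{\widehat U}$ I would use the dual characterization \eqref{deflor}. For a finite-measure $E\subset\widehat U$, $\int_E|G|\le\sum_i\int_{E\cap\widetilde B_i}|G|$, and on each $\widetilde B_i$ one has $|G|\le C\sum_{p\in\Lambda\cap B_i}|x-p|^{-1}$. The uniform-in-$p$ bound $\int_F|x-p|^{-1}\le C|F|^{1/2}$ gives $\int_{E\cap\widetilde B_i}|G|\le Cd_i|E\cap\widetilde B_i|^{1/2}$, and Cauchy--Schwarz combined with the disjointness of the $\widetilde B_i$ yields $\int_E|G|\le C(\sum_i d_i^2)^{1/2}|E|^{1/2}$. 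The ball construction is arranged so that $d_i=1$ for all but a controlled number of cluster balls; those cluster balls carry a large annular energy of order $d_i^2\log(r_i/\eta)$ paid for by $W(j,\chi)$ itself, producing $\sum_i d_i^2\le Cn$ and hence $\pqnormspace{G}{2}{\infty}{\widehat U}^2\le Cn$.

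For the main weighted-$L^2$ estimate I would expand
\[
\hal\int_U\chi|j-G|^2=\hal\int_U\chi|j|^2-\int_U\chi\,j\cdot G+\hal\int_U\chi|G|^2.
\]
On $U\setminus\bigcup_i\widetilde B_i$ the integrand equals $\chi|j|^2$, which is compared directly to $2W(j,\chi)$ via \eqref{WR} as $\eta\to 0$. Inside each $\widetilde B_i$, the singular part of $\hal\int\chi|j|^2$, which to leading order equals $\pi d_i^2\chi(a_i)|\log\eta|$, is cancelled by the matching explicit part of $\hal\int\chi|G|^2-\int\chi\,j\cdot G$. The residue is controlled by matching the annular Jerrard--Sandier lower bound on each $B_i$ to the upper bound coming from the definition of $W(j,\chi)$, up to errors of order $d_i\log d_i\cdot\pnorm{\chi}{\infty}$ plus boundary-gradient terms $r_id_i\pnorm{\nab\chi}{\infty}$ produced by replacing $\chi$ by $\chi(a_i)$ on each ball. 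Summing in $i$ yields the $C_\beta n(\pnorm{\chi}{\infty}+\pnorm{\nab\chi}{\infty})$ contribution. For balls meeting $\{0<\chi\le\hal\pnorm{\chi}{\infty}\}$, counted by $n'$, the constant-$\chi$ approximation fails and only the cruder $O(\pnorm{\chi}{\infty}\log d_i)$ type bound is available, which after summation yields the $C_\beta n'\log n'$ correction.

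The hard part will be getting the sharp prefactor $(1+\beta)$ in front of $W(j,\chi)$. This requires the Jerrard--Sandier annular lower bounds to be near-optimal with multiplicative loss at most $\beta$, which in turn forces the growth-stopping radius $r_0$ to be chosen small depending on $\beta$, and demands careful bookkeeping of the discrepancies between $\chi$ and its values $\chi(a_i)$ on each $B_i$. All losses produced by the boundary balls counted by $n'$ and by the cluster balls with $d_i\ge 2$ must be absorbed into the $n(\pnorm{\chi}{\infty}+\pnorm{\nab\chi}{\infty})$ and $n'\log n'$ terms, rather than into the $(1+\beta)W(j,\chi)$ part.
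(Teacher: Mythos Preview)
Your construction of $G$ has a genuine gap in establishing item~1 with a \emph{universal} constant $C$. With your choice
\[
G(x)=\sum_i\varphi_i(x)\sum_{p\in\Lambda\cap B_i}\np\log|x-p|,
\]
the $\lti$ estimate you derive is $\pqnorm{G}{2}{\infty}^2\le C\sum_i d_i^2$, and you then try to bound $\sum_i d_i^2$ by $Cn$ using that ``cluster balls carry a large annular energy \dots\ paid for by $W(j,\chi)$ itself.'' But the theorem requires $\pqnorm{G}{2}{\infty}^2\le Cn$ with $C$ independent of $j$ and of $W(j,\chi)$; your argument makes the constant depend on $W$. Worse, no universal bound $\sum_i d_i^2\le Cn$ is available at a fixed stopping radius $r_0$: if all $n$ points of $\Lambda$ lie in a disk of radius $\ll r_0/n$, the ball growth merges them into a single ball with $d_i=n$, giving $\sum_i d_i^2=n^2$. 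So with your $G$ the estimate in item~1 is simply false for clustered configurations.

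The paper avoids this by choosing $G$ differently: on each annulus $A$ produced by the ball growth (centered at $c_A$), one sets $G(x)=(x-c_A)^\perp/|x-c_A|^2$, i.e.\ with coefficient $1$ rather than $d_B$. The $\lti$ bound then follows from a purely combinatorial fact about ball growths---the annuli can be partitioned into at most $n$ concentrically rearrangeable families (Proposition~\ref{ball_growth_mcr}), each contributing $O(1)$ to $\wnorm{G}^2$ (Proposition~\ref{norm_mcr}). This gives $\pqnorm{G}{2}{\infty}^2\le Cn$ with no reference to $j$ or $W$ at all. The price is that $\curl G$ no longer equals $2\pi\nu$ inside the balls, so your ``$\curl(j-G)=-m$ hence $j-G\in L^2$'' mechanism is unavailable; instead, the paper obtains the $L^2$ control of $j-G$ from the circle identity
\[
\int_{\partial B(c,t)}|j|^2-\int_{\partial B(c,t)}|j-G|^2=\frac{2}{t}\int_{\partial B(c,t)}j\cdot\tau-\frac{2\pi}{t}\ge \frac{2\pi d_B}{t}-2\pi M,
\]
using that $2d_B-1\ge d_B$ for integers $d_B\ge 1$. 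Integrating this radially yields \eqref{br_01}--\eqref{br_02}, which after localizing via a lattice covering and absorbing the $n_\alpha\log n_\alpha$ terms (through a separate annular lower bound of order $n_\alpha^2$ where $\chi$ is large) produces \eqref{jW}. Your expansion-and-cancellation strategy for item~2 could perhaps be made to work, but only once $G$ is redefined so that item~1 holds unconditionally.
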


The purpose of coupling the $L^{2,\infty}$ estimate of $G$ to the $L^2$ estimate of $j-G$ is to allow $G$ to be eliminated from the estimate via the triangle inequality, resulting in an $L^{2,\infty}$ estimate for  $j$ alone.  This then yields an estimate in $L^p$ since $L^{2,\infty}  \hookrightarrow L^p$ on sets of finite measure. Taking $\beta=1$, for example, we can obtain the following.

\begin{coro}\label{main_cor}
 Under the same assumptions,  for every  $1\le p<2$ we have
\begin{equation*}
\frac{1}{C_p |U|^{\frac{1}{p}- \hal}} \pnormspace{\sqrt{\chi} j}{p}{U} \le \pqnormspace{\sqrt{\chi} j}{2}{\infty}{U} \le C    \left( W (j, \chi) + n ( \pnorm{\chi}{\infty} + \pnorm{\nab \chi}{\infty}) + n' \log n'\right)^{\hal},
\end{equation*}
where $n$ and $n'$ are as in the theorem, $C>0$ is a universal constant, and $C_p>0$ is a constant depending on $p$.
\end{coro}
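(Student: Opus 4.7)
The plan is to deduce the corollary directly from Theorem \ref{main} by using triangle inequality in the Lorentz space $\lti$ to eliminate the auxiliary field $G$, then pass to $L^p$ by the standard embedding on sets of finite measure.

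First I would apply Theorem \ref{main} with $\beta = 1$ to produce a vector field $G$ on $\widehat U$ with $\|G\|_{\lti(\widehat U)}^2 \le Cn$ and
\begin{equation*}
\tfrac{1}{2}\int_U \chi|j-G|^2 \le 2 W(j,\chi) + C n(\|\chi\|_\infty + \|\nab\chi\|_\infty) + C n' \log n'.
\end{equation*}
Note from the definition \eqref{deflor} that $\|\cdot\|_{\lti}$ is genuinely subadditive (it is a supremum of linear functionals of $|f|$), so the triangle inequality gives
\begin{equation*}
\|\sqrt{\chi}\, j\|_{\lti(U)} \le \|\sqrt{\chi}(j-G)\|_{\lti(U)} + \|\sqrt{\chi}\, G\|_{\lti(U)}.
\end{equation*}

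For the first term, I would use the continuous embedding $L^2 \hookrightarrow L^{2,\infty}$ (which is immediate from \eqref{deflor} and Cauchy--Schwarz, since $|E|^{-1/2}\int_E |f| \le \|f\|_{L^2}$) to estimate
\begin{equation*}
\|\sqrt{\chi}(j-G)\|_{\lti(U)} \le \|\sqrt{\chi}(j-G)\|_{L^2(U)} \le C\bigl(W(j,\chi) + n(\|\chi\|_\infty + \|\nab\chi\|_\infty) + n'\log n'\bigr)^{1/2}.
\end{equation*}
For the second term, since $\sqrt{\chi}$ is bounded and $L^\infty$ multipliers act on $\lti$, I would bound
\begin{equation*}
\|\sqrt{\chi}\, G\|_{\lti(U)} \le \|\sqrt{\chi}\|_\infty \|G\|_{\lti(\widehat U)} \le C\sqrt{\|\chi\|_\infty\, n},
\end{equation*}
which is itself absorbed into the right-hand side since $\|\chi\|_\infty \, n \le n(\|\chi\|_\infty + \|\nab\chi\|_\infty)$. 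Combining these two bounds yields the $\lti$ half of the corollary.

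For the left-hand $L^p$ inequality, I would use only that $U$ has finite measure and the standard embedding $\lti(U) \hookrightarrow L^p(U)$ for $1\le p <2$: from \eqref{deflor} and the layer-cake representation, one has $\|f\|_{L^p(U)} \le C_p |U|^{1/p - 1/2}\|f\|_{\lti(U)}$. Applied to $f = \sqrt{\chi} j$ (which is supported in $U$), this gives the first displayed inequality of the corollary. I do not anticipate any real obstacle here, as Theorem \ref{main} does all the heavy lifting; the only point that deserves some care is verifying that the version of the $\lti$-``norm'' defined by \eqref{deflor} genuinely obeys the triangle inequality and absorbs bounded multipliers, both of which follow transparently from that definition.
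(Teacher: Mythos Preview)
Your proposal is correct and follows essentially the same approach as the paper: apply Theorem \ref{main} with $\beta=1$, use $L^2 \hookrightarrow \lti$ on $\sqrt{\chi}(j-G)$, bound $\sqrt{\chi}\,G$ in $\lti$ via the $L^\infty$ multiplier $\sqrt{\chi}$ and item 1 of the theorem, combine via the triangle inequality for the $\lti$ norm, and finish with the $\lti \hookrightarrow L^p$ embedding on the finite-measure set $U$. The paper's version differs only in cosmetic packaging (it invokes Lemma \ref{quasi_properties} and \eqref{normed} for the multiplier bound rather than stating it directly), not in substance.
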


When $\chi$ is a cutoff function associated to the domain $U$, the
 term $n'\log n'$ is  a boundary contribution that we typically expect to be negligible relative to $n$.  For example, if for balls of radius $R$, $n$ scales like $n \sim \pi R^2$, then $n'$ can be regarded as the number of elements  of  $\Lambda$ in the annulus $B(0,R)\backslash B(0,R-1)$.  Then
\begin{equation*}
 n' \sim 2\pi R  \Rightarrow n' \log n'  \ll  n
\end{equation*}
as $R \to \infty$. Moreover, $|\widehat U|\sim \pi R^2\sim n$ and  as we mentioned, we expect $W$ to be typically of order $n$. The result of this  corollary in such a situation  is then that
\begin{equation*}
\pqnormspace{\sqrt{\chi} j}{2}{\infty}{U} \le C n^{1/2} \text{ and } \pnormspace{\sqrt{\chi} j}{p}{U} \le C n^{1/p}.
\end{equation*}
The $L^p$ estimate should be compared to Lemma \ref{wlp}: we improve from $n^{1/p} (\log n)^{1/2}$ to the optimal power $n^{1/p}$.

\bigskip

The paper is organized as follows. In Section \ref{sec2} we recall the various definitions that we need for $\lti$ and we see how to estimate $\lti$ norms for vector fields defined on non-overlapping annuli.  In Section \ref{sec3} we return to the ball construction for \eqref{eqj}, borrowed from  Section 4 of \cite{ss1}.  We improve the estimates it yields by utilizing methods we introduced in \cite{st}.  More specifically, we construct a vector field $G$ that mimics the optimal behavior around each $p\in \Lambda$, and then we plug in the explicit Lorentz estimates of the previous section.

\begin{remark}\label{notation}
Throughout the paper, $B(x,r)$ denotes the open ball of center $x$ and radius $r$, while $\bar{B}(x,r)$ denotes the closed ball of center $x$ and radius $r$.  If $B=B(x,r)$, then for any $\lambda >0$ we write $\lambda B$ for $B(x,\lambda r)$.  We employ the same notation for closed balls.
\end{remark}

\section{Lorentz space estimates} \label{sec2}

\subsection{Definition and properties of the Lorentz space $L^{2,\infty}$} \label{seclorentz}

In this subsection we start by recalling the definition of the Lorentz space $L^{2,\infty}$ and the properties we will need.  Let $\Omega \subseteq \Rn{2}$.  For a function $f:\Omega \rightarrow
\Rn{k}$, $k\ge 1$, we define the distribution function of $f$ by
\begin{equation}
\lambda_f(t) = \abs{\{x \in \Omega \;|\; \abs{f(x)} > t   \}},
\end{equation}
where  $\abs{A}$ denotes the Lebesgue measure of the set $A$.  We then define the quasi-norm
\begin{equation}\label{quasi_def}
\wnorm{f} = \sqrt{\sup_{t>0}t^2\lambda_f(t)},
\end{equation}
and the Lorentz space $L^{2,\infty}(\Omega) = \{f \;|\; \wnorm{f} < \infty \}$.  As a quasi-norm, the quantity $\wnorm{\cdot}$ behaves as a norm except in the triangle inequality, where it instead satisfies $\wnorm{f + g} \le C( \wnorm{f} + \wnorm{g})$ for  some $C > 1$.   The space $L^{2,\infty}$, as defined by the quasi-norm, is only a quasi-Banach space, i.e. a linear space in which every quasi-norm Cauchy sequence converges in the quasi-norm.   However, it can be normed by introducing the norm
\begin{equation}
  \pqnorm{f}{2}{\infty}  = \sup_{\abs{E}<\infty} \abs{E}^{-1/2} \int_{E} \abs{f(x)}dx.
\end{equation}
We then have that (see, for example,  Lemma 6.1 of \cite{st})
\begin{equation}\label{normed}
\wnorm{f} \le \pqnorm{f}{2}{\infty} \le 2 \wnorm{f}.
\end{equation}
For a more thorough discussion of $L^{2,\infty}$, and the other Lorentz spaces in general, we refer to the book \cite{grafakos}.

It is perhaps more natural to work with the norm rather than the quasi-norm.  However, the estimates we need are easier  to derive with the quasi-norm, so we will mostly work with it.  We now record a Lemma on some properties of the quasi-norm $\wnorm{\cdot}$.  The proof follows directly from the definition \eqref{quasi_def}, and is thus omitted.

\begin{lem}\label{quasi_properties}
The quasi-norm $\wnorm{\cdot}$ satisfies the following properties.
\begin{enumerate}
 \item If $\abs{f(x)} \le \abs{g(x)}$ for a.e. $x$, then $\wnorm{f} \le \wnorm{g}$.

 \item Suppose $f = f_1 + f_2$ with $\supp(f_1) \cap \supp(f_2) = \varnothing$.  Let $T_1,T_2$ be translation operators so that $\supp(T_1 f_1) \cap \supp(T_2 f_2)= \varnothing$.  Then
\begin{equation}
 \wnorm{f} = \wnorm{T_1 f_1 + T_2 f_2}.
\end{equation}

 \item   If $f = f_1 + f_2$ with $\supp(f_1) \cap \supp(f_2) = \varnothing$, then
\begin{equation}
 \wnorm{f}^2 \le \wnorm{f_1  }^2 + \wnorm{f_2  }^2.
\end{equation}

\item Let $f(x) = 1/\abs{x-c}$ for some $c \in \Rn{2}$.  Then $\wnorm{f} = \sqrt{\pi}$.

\end{enumerate}

\end{lem}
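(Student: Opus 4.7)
The plan is to verify each of the four properties directly from the definition $\wnorm{f}^2 = \sup_{t>0} t^2 \lambda_f(t)$, where $\lambda_f(t) = |\{x \in \Omega : |f(x)|>t\}|$. The main observation that makes all of this routine is that the distribution function $\lambda_f$ is monotone in $|f|$ pointwise, additive under disjoint supports, and invariant under (measure-preserving) translations.

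For property (1), pointwise monotonicity yields the inclusion $\{|f|>t\}\subseteq\{|g|>t\}$ up to a null set, hence $\lambda_f(t)\le \lambda_g(t)$ for every $t>0$; multiplying by $t^2$ and taking the supremum gives $\wnorm{f}\le\wnorm{g}$. For property (2), when $f_1,f_2$ have disjoint supports the set $\{|f_1+f_2|>t\}$ is the disjoint union of $\{|f_1|>t\}$ and $\{|f_2|>t\}$, so $\lambda_f(t)=\lambda_{f_1}(t)+\lambda_{f_2}(t)$. Since $T_1,T_2$ are translations (hence measure-preserving), $\lambda_{T_i f_i}(t)=\lambda_{f_i}(t)$, and the assumed disjointness of the translated supports lets us apply the same additivity identity to $T_1 f_1 + T_2 f_2$, giving equal distribution functions and therefore equal quasi-norms.

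For property (3), the same additivity $\lambda_f(t)=\lambda_{f_1}(t)+\lambda_{f_2}(t)$ gives
\begin{equation*}
t^2\lambda_f(t) = t^2\lambda_{f_1}(t)+t^2\lambda_{f_2}(t) \le \wnorm{f_1}^2+\wnorm{f_2}^2,
\end{equation*}
and taking the supremum over $t>0$ on the left yields the stated inequality. For property (4), a direct computation using translation invariance (reducing to $c=0$) shows
\begin{equation*}
\{x\in\mr^2 : 1/|x-c|>t\} = B(c,1/t),
\end{equation*}
which has Lebesgue measure $\pi/t^2$, so $t^2\lambda_f(t)=\pi$ for every $t>0$ and $\wnorm{f}=\sqrt{\pi}$.

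There is no real obstacle here: each item is a one- or two-line deduction from elementary properties of Lebesgue measure together with the definition \eqref{quasi_def}. The only subtlety worth flagging is the disjointness hypothesis in (2), which must be verified for the translated functions as well in order to reapply the disjoint-support additivity of $\lambda_{(\cdot)}$; this is precisely the role of the assumption $\supp(T_1 f_1)\cap\supp(T_2 f_2)=\varnothing$.
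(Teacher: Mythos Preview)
Your proof is correct and follows exactly the approach the paper indicates: the paper omits the proof entirely, stating only that it ``follows directly from the definition \eqref{quasi_def},'' and your argument carries out precisely that direct verification for each item.
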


\subsection{Minimal concentric rearrangement}

In this subsection, we define the notion of  minimal concentric rearrangement number for a finite collection of annuli.  This number, which we had not previously introduced in \cite{st}, will serve as a tool for estimating the Lorentz space norms of vector fields defined on annuli obtained from the ``ball construction.''

Consider a finite collection of annuli, $\mathcal{A} = \{ A_i
\}_{i=1}^M$, where
\begin{equation}\label{annuli_form}
A_i = \{ x \in \Rn{2} \;\vert\;  r_i < \abs{x-c_i} \le s_i  \}
\text{ for } c_i \in \Rn{2}, 0 < r_i < s_i < \infty.
\end{equation}
We say that such a collection $\mathcal{A}$ may be concentrically
rearranged if the annuli of $\mathcal{A}$ can be translated in
$\Rn{2}$ so that the translates share a common center and are
pair-wise disjoint.  This is obviously equivalent to the property
that, up to relabeling the indices, the inner-radii and outer-radii
satisfy
\begin{equation}
  r_1 < s_1 \le r_2 < s_2 \le  \cdots \le r_{M-1}< s_{M-1} \le r_{M} < s_M.
\end{equation}

Clearly, not every finite collection of annuli can be concentrically
rearranged.  However, every such $\mathcal{A}$ can be partitioned
into disjoint subcollections $\{ \mathcal{A}_k\}_{k=1}^K$ such that
$\mathcal{A} = \cup_{k=1}^K \mathcal{A}_k$ and each $\mathcal{A}_k$
can be concentrically rearranged.  This property trivially holds,
for instance, if $\mathcal{A}_i = \{A_i\}$ for $i=1,\dotsc,M=K$.  In
general, though, this trivial partitioning into singletons is not
optimal in terms of $K$.  We pursue this optimal $K \in
\{1,\dotsc,M\}$ via the following definition.

\begin{definition}\label{mcr_def}
Let $\mathcal{A}$ be a finite collection of annuli.  The ``minimal
concentric rearrangement'' number of $\mathcal{A}$ is defined as
\begin{multline}
 \mcr(\mathcal{A}) = \min \{ K \in \mathbb{N} \;\vert\;  \mathcal{A} = \cup_{k=1}^K \mathcal{A}_k \text{ with } \mathcal{A}_i \cap \mathcal{A}_j = \varnothing \text{ for } i \neq j
 \text{ so that }\\
\mathcal{A}_k  \text{ can be concentrically
rearranged for }k=1,\dots,K \}.
\end{multline}
\end{definition}

\begin{remark}
If $\mcr(\mathcal{A}) =K$ and $\mathcal{A} = \cup_{k=1}^K \mathcal{A}_k$, then it must hold that $\mathcal{A}_k \neq \varnothing$ since otherwise empty sets may be removed from the partition of $\mathcal{A}$, contradicting the definition of $\mcr(\mathcal{A})$.
\end{remark}

\begin{remark}\label{mcr_rem}
If $\mathcal{A}_1$ and $\mathcal{A}_2$ are two finite collections of
annuli, then $\mcr(\mathcal{A}_1 \cup  \mathcal{A}_2) \le
\mcr(\mathcal{A}_1) + \mcr(\mathcal{A}_2)$.
\end{remark}

We will be interested, in particular, in collections of annuli that come from ball-growth procedures, which were first introduced in \cite{je,sa}.  To define the ball-growth procedure, we record the following result, which is  Theorem 4.2 of \cite{ssbook}, except that here we have ``reparameterized'' the ball-growth parameter.  Recall that we use the notational conventions mentioned in Remark \ref{notation}.

\begin{lem}\label{ball_growth}
Let $\mathcal{B}_0$ be a finite, disjoint collection of closed balls so that the total radius of $\mathcal{B}_0$ is $r_0>0,$ i.e. $\sum_{B \in \B_0} r(B) = r_0$, where $r(B)$ denotes the radius of the ball $B$.  Let $r> r_0$.  Then there exists a family $\{\mathcal{B}(t)\}_{t\in [r_0,r]}$ of collections of disjoint, closed balls such that the following hold.
\begin{enumerate}
 \item $\mathcal{B}(r_0)=\mathcal{B}_0$.
 \item For $r_0 \le t \le s \le r$,
\begin{equation}
 \bigcup_{B \in \mathcal{B}(t)} B \subseteq \bigcup_{B \in \mathcal{B}(s)} B.
\end{equation}
 \item There exists a finite set $S \subset (r_0,r]$ such that if $[t,s] \subset [r_0,r]\backslash S$, then $\mathcal{B}(s) = \frac{s}{t} \mathcal{B}(t)$.  In particular, if $B(s) \in \mathcal{B}(s)$ and $B(t) \in \mathcal{B}(t)$ are such that $B(t) \subset B(s)$, then $B(s) = \frac{s}{t} B(t)$.  The set $S$ is referred to as the set of ``merging times.''
 \item For every $t \in [r_0,r]$, the total radius of $\B(t)$ is $t$, i.e.
\begin{equation}\label{bg_1}
 \sum_{B \in \mathcal{B}(t)} r(B) =  t.
\end{equation}
\end{enumerate}
\end{lem}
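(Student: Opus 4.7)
The plan is to construct the family $\{\mathcal{B}(t)\}_{t \in [r_0,r]}$ inductively by alternating a \emph{dilation phase} with finitely many \emph{merging events}, mimicking the classical Jerrard--Sandier ball construction but with the radius-linear parameterization dictated by identity \eqref{bg_1}.

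For a dilation phase starting from a disjoint collection $\mathcal{B}(t_0)$ satisfying $\sum_{B \in \mathcal{B}(t_0)} r(B) = t_0$, I would simply set $\mathcal{B}(t) := (t/t_0)\mathcal{B}(t_0)$ for $t \ge t_0$, where the scalar multiplication is in the sense of Remark \ref{notation} (same centers, radii multiplied by $t/t_0$). On this interval, (3) holds by construction, (4) follows since the total radius scales linearly in $t$, and (1)--(2) are immediate. The phase can be continued as long as the balls remain pairwise disjoint, and I would stop it at the first time $t_1 \in (t_0,r]$ at which at least two distinct balls become tangent; since each pairwise signed distance $|c_i-c_j| - (r_i(t) + r_j(t))$ is a strictly decreasing affine function of $t$, such a $t_1$ is well defined (or the phase simply runs up to $t_1 = r$ if no tangency is reached).

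At each such $t_1$ I would partition $\lim_{t \uparrow t_1}\mathcal{B}(t)$ into maximal tangency-connected clusters $\{\bar B(c_\alpha, r_\alpha)\}_{\alpha \in I}$ and replace each cluster by a single closed ball of radius $\rho = \sum_{\alpha \in I} r_\alpha$, centered so that it contains the union of the cluster; such a ball exists because any tangency-connected chain has total diameter at most $2\rho$, and hence its circumscribed ball has radius at most $\rho$. This operation preserves (4) exactly, keeps distinct new clusters at positive distance (since distinct maximal clusters are separated at $t_1$), and strictly reduces the cardinality of the collection. Consequently only finitely many mergers can occur, which produces the finite set $S$, and after each merger the process restarts with a fresh dilation phase from $\mathcal{B}(t_1)$ until $t=r$ is attained.

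The main obstacle, as I see it, is not any single estimate but verifying the global consistency of (2) and (3) across the merging set $S$. Containment (2) is built into the construction because the merged ball is explicitly chosen to contain the pre-merger union, and dilation only enlarges balls. The scaling identity in (3), namely $\mathcal{B}(s) = (s/t)\mathcal{B}(t)$ whenever $[t,s]\subset[r_0,r]\setminus S$, holds because on any such interval the centers and the cardinality of the collection are constant and the radii evolve by the same scalar factor, so the required identity follows from the fact that the dilation map around each fixed center is a one-parameter group. The only delicate point is that when several pairs of balls become tangent simultaneously at $t_1$, one must choose the maximal clusters before merging, rather than iterating pairwise mergers, to avoid producing overlapping output balls; this is handled by the connected-component definition above.
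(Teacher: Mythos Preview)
Your approach is exactly the standard construction the paper invokes (the paper does not prove the lemma in detail but cites Theorem~4.2 of \cite{ssbook} and sketches the same dilate-then-merge procedure). Two points in your merging step, however, need correction.

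First, the implication ``diameter $\le 2\rho$ $\Rightarrow$ circumscribing ball of radius $\le \rho$'' is false in $\mathbb{R}^2$: three points at the vertices of an equilateral triangle of side $2\rho$ have diameter $2\rho$ but circumradius $2\rho/\sqrt 3>\rho$. The conclusion you need is nevertheless correct; the proper justification is an inductive pairwise merge: if $\bar B(c_1,r_1)$ and $\bar B(c_2,r_2)$ intersect (or are tangent), both lie in $\bar B\bigl(\tfrac{r_2 c_1+r_1 c_2}{r_1+r_2},\,r_1+r_2\bigr)$, and iterating this through a tangency-connected cluster yields a single ball of radius $\rho=\sum_\alpha r_\alpha$ containing the union.

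Second, your assertion that the merged balls for distinct maximal clusters remain at positive distance is not justified and is in general false. For instance, if $\bar B((0,0),1)$ and $\bar B((2,0),1)$ are tangent, their unique merged ball is $\bar B((1,0),2)$, which overlaps $\bar B((1,2+\varepsilon),1)$ even though the latter was at positive distance from the original pair. The remedy is to iterate: after a round of merging, re-test the output balls for tangency or overlap and merge again if needed. This terminates because each merge strictly decreases the cardinality, and the total radius is preserved at every step. With this iterated merging in place, the rest of your argument goes through.
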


The proof of this lemma proceeds roughly as follows. The initial finite set of disjoint, closed balls has their radii grown, all at the same multiplicative rate, until two (or more) grown balls become tangent.  At this time (an element of the set $S$), the tangent balls are ``merged'' into a larger ball in such a way that the sums of the radii are preserved.  Then the growth procedure is started again.  The resulting family $\{\mathcal{B}(t)\}_{t\in [r_0,r]}$ can be thought of as a ``piece-wise continuous'' growth process with ``jump discontinuities'' at the merging times $S$.  However, as guaranteed by \eqref{bg_1}, the sum of the radii of the balls is continuous.

\begin{remark}\label{ball_growth_remark}
Let $\B_0$ be a finite, disjoint collection of closed balls of total radius $r_0>0$, and let $\{\B(t)\}_{t\in[r_0,r]}$ be the family generated by the ball-growth procedure of Lemma \ref{ball_growth}, where $r_0 < r < \infty$.  The collection $\B(r)$ may then be used as the starting point of another ball-growth procedure to generate $\{\B(t)\}_{t\in[r,r_1]}$.  We may then concatenate the two families to form $\{\B(t)\}_{t\in[r_0,r_1]}$.  This family satisfies all of the conclusions of Lemma \ref{ball_growth} with $r$ replaced by $r_1$, and we may view the new family as having been generated by a single growth procedure.  In other words, if the final collection of a ball-growth coincides with the starting collection of another growth, we can join the two and view what results as a single growth process.
\end{remark}

Given a family $\{\mathcal{B}(t)\}_{t\in [r_0,r]}$ generated by the ball-growth procedure, we wish to define a corresponding finite collection of disjoint annuli of the form \eqref{annuli_form} -- these are simply the annuli generated through the ball-growth. We do so now in the following definition.

\begin{definition}\label{b_growth_annuli}
Suppose that a finite, disjoint collection of closed balls
$\B_0$ is grown via a ball-growth procedure according to Lemma \ref{ball_growth} into $\{\mathcal{B}(t)\}_{t \in [r_0,r]}$.  Let $S \subset (r_0,r]$ be the
finite set of merging times, and write $N = \#(S)$.  If $N =0$ we
define $t_0 =r_0$ and $t_1 = t_{N+1} = r$.  If $N \ge 1$, then we
enumerate $S = \{t_i\}_{i=1}^N$ so that $r_0 < t_1 < t_2 < \cdots <
t_N \le r$, and then we set $t_0 = r_0$ and $t_{N+1}=r$.  We then
define the finite collection of disjoint annuli $\mathcal{A}$ according to
\begin{equation}
 \mathcal{A} = \bigcup_{i=0}^N \bigcup_{B \in \mathcal{B}(t_i)} \left(\frac{t_{i+1}}{t_{i}} B\right) \backslash B.
\end{equation}
Note that if $t_{i+1} > t_{i}$, then $((t_{i+1}/t_{i}) B)
\backslash B$ is an annulus of the form written in
\eqref{annuli_form}.  We say that the collection  $\mathcal{A}$ is
generated by a ball-growth, starting from $\B_0$.

\end{definition}

If the finite collection of annuli $\mathcal{A}$ is generated by a
ball-growth, then it is possible to estimate $\mcr(\mathcal{A})$ in
terms of the number of initial balls in the ball-growth.  This
estimate is the content of our next result.

\begin{pro}\label{ball_growth_mcr}
Suppose that $\mathcal{A}$ is a finite collection of annuli
generated by a ball-growth, starting from a disjoint collection of $n \ge 1$
closed balls.  Then $\mcr(\mathcal{A}) \le n$.
\end{pro}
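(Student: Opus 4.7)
The plan is to construct an explicit partition $\mathcal{A} = \bigsqcup_{B_0 \in \B_0} \mathcal{A}^{B_0}$ indexed by the $n$ initial balls, such that each subcollection $\mathcal{A}^{B_0}$ is concentrically rearrangeable; this immediately gives $\mcr(\mathcal{A}) \le n$.

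First I organize the bookkeeping. Each ball $B$ that appears in some $\mathcal{B}(t_i)$ has a well-defined birth time (either $r_0$, if $B \in \B_0$, or a merging time at which $B$ was created by a merger) and death time (either the first subsequent merging time at which $B$ participates in a merger, or $r$ if $B$ survives to the end). Throughout its lifetime $B$ keeps a fixed center, and by part 3 of Lemma \ref{ball_growth} its radius scales linearly in $t$; moreover, each annulus of $\mathcal{A}$ is generated by a unique such ball on a unique inter-merger subinterval within its lifetime.

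Next I designate, for each ball $B'$ born at a merging time $\tau$, one of its pre-merger contributors (a ball whose grown version at $\tau$ lies in $B'$) as its \emph{primary contributor}; the choice is arbitrary. Starting from each initial ball $B_0$, this yields a chain $(B_0 = B^{(0)}, B^{(1)}, \dots, B^{(k)})$ in which $B^{(j+1)}$ is the ball born at the death time of $B^{(j)}$ for which $B^{(j)}$ is the primary contributor, if such a ball exists, and otherwise the chain terminates at $B^{(j)}$. Every ball in the growth process lies on exactly one chain (trace primary contributors backwards to reach a unique initial ball), so defining $\mathcal{A}^{B_0}$ to be the annuli generated by balls on the chain of $B_0$ produces the desired partition into $n$ subcollections.

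To verify that each $\mathcal{A}^{B_0}$ is concentrically rearrangeable, list its annuli in chronological order. Within the lifetime of a single $B^{(j)}$, consecutive annuli actually share a center and the outer radius of one equals the inner radius of the next. At a transition from $B^{(j)}$ to $B^{(j+1)}$ at merger time $\tau$, the outer radius of the last annulus of $B^{(j)}$ equals the radius of $B^{(j)}$ at time $\tau$, which is at most the radius of $B^{(j+1)}$ because $B^{(j+1)}$ contains $B^{(j)}$ at the merger; this upper bound is exactly the inner radius of the first annulus of $B^{(j+1)}$ along the chain. The resulting sequence of radii thus satisfies $r_1 < s_1 \le r_2 < s_2 \le \cdots \le r_L < s_L$, as required. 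The only nontrivial step is this radius inequality across a merger, which reduces to the geometric observation that a merged ball, by construction, contains each of its pre-merger contributors; the rest is careful bookkeeping about the ball-growth lineage.
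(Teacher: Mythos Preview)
Your proof is correct and takes a genuinely different route from the paper's. The paper argues by induction on $n$: after reducing to the case of a single final ball, it looks at the largest merging time $T_*$, applies the inductive hypothesis to each of the $J\ge 2$ sub-ball-growths ending in the balls $B_i(T_*)$, and then observes that the outermost annulus $\bar B(c,r)\setminus \bar B(c,s)$ can be absorbed into one of the $\mathcal{A}_i$'s without increasing its $\mcr$, since $r_i<s$.

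Your argument is instead a direct, explicit construction of the witnessing partition: one subcollection per initial ball, obtained by following the primary-contributor lineage forward and collecting the annuli along the way. The nesting of radii across a merger is exactly the content of item~2 of Lemma~\ref{ball_growth} (monotonicity of the union), together with the disjointness of the post-merger collection; you might mention this explicitly rather than appealing to ``by construction.'' Unwinding the paper's induction actually produces your chains (the ball whose subcollection absorbs the outer annulus is playing the role of your primary contributor), so the two proofs are close in spirit. Your version has the virtue of making the partition completely explicit in one pass, while the paper's inductive framing makes the recursive structure of the growth more visible; neither is materially shorter than the other.
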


\begin{proof}

It suffices to prove the result assuming that the final collection
of balls generated  by the ball-growth procedure is just a single
ball.  In the general case with $m$ final balls, say
$\{B_i\}_{i=1}^m$, each $B_i$ can  be viewed as having been grown
from $n_i \ge 1$ disjoint balls, where $\sum_{i=1}^m n_i = n$.
Writing  $\mathcal{A} = \cup_{i=1}^m \mathcal{A}_i$ for
$\mathcal{A}_i$ consisting of the annuli contained in $B_i$, we may
use the single-final-ball estimate $\mcr(\mathcal{A}_i) \le n_i$ in
conjunction with Remark \ref{mcr_rem} to estimate
\begin{equation}
 \mcr(\mathcal{A}) = \mcr\left(\bigcup_{i=1}^m \mathcal{A}_i \right) \le \sum_{i=1}^m \mcr(\mathcal{A}_i) \le \sum_{i=1}^m n_i = n.
\end{equation}
We will thus restrict to proving the result when the final
collection is just a single ball.  The proof proceeds by induction
on the number of initial balls, $n \ge 1$.

In the case $n=1$ there is a single initial and final ball, so
$\mathcal{A} = \{ A\}$ for $A = \{  r_0 < \abs{x-c} \le r\}$ for some
$c \in \Rn{2}$ and $r > r_0$.  This collection is a single
annulus, and is thus trivially concentrically arranged.  Hence
$\mcr(\mathcal{A}) = 1 =n$.

Suppose now that for all $1 \le k \le n$, $\mcr(\mathcal{A}) \le k$
for any finite collection of annuli generated by a ball-growth,
starting from $k$ initial disjoint closed balls and ending in a
single final ball.  Let $\mathcal{A}$ be a finite collection of
annuli generated by a ball-growth, starting from $n+1$ disjoint
closed balls and ending in a single final ball.  We will show that
$\mcr(\mathcal{A}) \le n+1$, which then proves the desired result
for arbitrary $n$ by induction.

Let $\{ \mathcal{B}(t)\}_{t \in [r_0,r]}$ be the family of collections
of balls generated in the process, with $\mathcal{B}(r) = \bar{B}(c,r) $ for some $c \in \Rn{2}$ and $r>r_0$.  Since $n+1 \ge 2$, the ball-growth procedure that generated $\mathcal{A}$ must
have involved at least one merging time.  Let us call the largest
merging time  $T_* \in (r_0,r]$.  The ball-growth procedure dictates
that at $T_*$ a collection of closed balls $\{B_i(T_*) \}_{i=1}^J$,
$J \ge 2$,  merged into a single ball $\bar{B}(c,s)$; if $T_* = T$  then
$r=s$, and if $T_* < T$ then $ s < r$.  We will assume that $T_* <
T$.  The case $T = T_*$ is easier and may be handled with an
obvious variant of the argument below.

Since $T_* < T$, one of the annuli in $\mathcal{A}$ is $\bar{B}(c,r)
\backslash \bar{B}(c,s)$.  All of the remaining annuli in $\mathcal{A}$
are subsets of exactly one of the balls in  $\{B_i(T_*) \}_{i=1}^J$.
This allows us to write $\mathcal{A}$ as a disjoint union:
\begin{equation}\label{bgm_1}
 \mathcal{A} = \{\bar{B}(c,r) \backslash \bar{B}(c,s)\} \cup \bigcup_{i=1}^J \mathcal{A}_i,
\end{equation}
where $\mathcal{A}_i = \{ A \in \mathcal{A} \;\vert\; A \subset
B_i(T_*) \}$.  Let $ n_i$ denote the number of initial balls
contained in $B_i(T_*)$.  Clearly, $\sum_{i=1}^J n_i =n+1$.   Also, $1\le
n_i \le n$ since if one of the $n_i$'s were equal to $n+1$, then
$J$ would have to be $1$, in contradiction with the fact that there
is a merging at $T_*$. Then each $\mathcal{A}_i$ can be regarded as
having been generated by a ball-growth, starting from $n_i$ disjoint
closed balls and ending in a single final ball, $B_i(T_*)$. The
induction hypothesis implies that $\mcr(\mathcal{A}_i) \le n_i$ for
$i=1,\dotsc,J$, which we may combine with \eqref{bgm_1} and Remark
\ref{mcr_rem} to see that
\begin{equation}\label{bgm_2}
\begin{split}
\mcr(\mathcal{A}) & \le \mcr( \{\bar{B}(c,r) \backslash \bar{B}(c,s)\}
  \cup  \mathcal{A}_1) + \sum_{i=2}^J \mcr(\mathcal{A}_i) \\
& \le \mcr( \{\bar{B}(c,r) \backslash \bar{B}(c,s)\}
  \cup  \mathcal{A}_1) + \sum_{i=2}^J n_i \\
&=  \mcr( \{\bar{B}(c,r) \backslash \bar{B}(c,s)\}
  \cup  \mathcal{A}_1)  + (n+1 - n_1).
\end{split}
\end{equation}

We claim that
\begin{equation}\label{bgm_3}
\mcr( \{\bar{B}(c,r) \backslash \bar{B}(c,s)\} \cup  \mathcal{A}_1)  \le n_1.
\end{equation}
Once this is established, \eqref{bgm_2} and \eqref{bgm_3} imply that
$\mcr(\mathcal{A}) \le n+1$, which completes the proof.  To prove
the claim, we first note that by \eqref{bg_1} of Lemma \ref{ball_growth}, the ball-growth procedure requires that if $r_i$ is the radius of $B_i(T_*)$, then $\sum_{i=1}^J r_i =
s$, which in particular means that $r_1 < s$.  This and the fact
that $\mcr(\mathcal{A}_1) \le n_1$ imply that $\mathcal{A}_1$ can be
rearranged into at most $n_1$ disjoint collections of concentric
annuli, each of which can be contained in a concentric ball of
radius $r_1 < s$.  We may then translate one of these collections of
disjoint, concentric annuli to have center $c$ so that the union of
these translated annuli with  $\bar{B}(c,r)\backslash \bar{B}(c,s)$  forms a
single new collection of  disjoint, concentric annuli.   From this we
easily deduce that \eqref{bgm_3} holds, finishing the proof.

\end{proof}

\subsection{Lorentz space estimates for vector fields on annuli}
Now we provide an estimate of the $L^{2,\infty}$ quasi-norm of
certain vector fields that are supported on disjoint annuli.  Our present estimate is somewhat easier than a similar estimate we proved in \cite{st}.  The reason for this is that we are now interested in estimates in terms of the number of initial balls (related to the minimal concentric rearrangement number of the annuli through Proposition \ref{ball_growth_mcr}), but in \cite{st} we were (roughly speaking) concerned with estimates in terms of the number of final balls.

We now turn to our estimate of the $L^{2,\infty}$ quasi-norm in terms of $\mcr(\mathcal{A})$.

\begin{pro}\label{norm_mcr}
 Suppose that $\mathcal{A} = \{A_i\}_{i=1}^M$ is a finite, disjoint collection of annuli of the form \eqref{annuli_form}, with centers $c_i$.  Let
\begin{equation}
 f(x) = \sum_{i=1}^M \vchi_{A_i}(x) \frac{v_i(x)}{\abs{x-c_i}}
\end{equation}
for vector fields $v_i$ satisfying the bound $\abs{v_i(x)} \le \alpha < \infty$ on $A_i$ for
$i=1,\dotsc,M$.  Then
\begin{equation}
 \wnorm{f} \le \alpha \sqrt{\pi \mcr(\mathcal{A}}).
\end{equation}
\end{pro}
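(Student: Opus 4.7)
The plan is to exploit the partition provided by the minimal concentric rearrangement and then reduce each piece to the model estimate $\wnorm{1/|x|} = \sqrt{\pi}$ from Lemma \ref{quasi_properties}(4).

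First I would let $K = \mcr(\mathcal{A})$ and write $\mathcal{A} = \bigsqcup_{k=1}^K \mathcal{A}_k$, where each $\mathcal{A}_k$ can be concentrically rearranged. Correspondingly, decompose
\begin{equation*}
f = \sum_{k=1}^K f_k, \qquad f_k(x) = \sum_{A_i \in \mathcal{A}_k} \vchi_{A_i}(x) \frac{v_i(x)}{|x-c_i|},
\end{equation*}
and note that the $f_k$ have pairwise disjoint supports since the $A_i$ are pairwise disjoint. By property 3 of Lemma \ref{quasi_properties}, it suffices to prove $\wnorm{f_k}^2 \le \pi \alpha^2$ for each $k$, since then
\begin{equation*}
\wnorm{f}^2 \le \sum_{k=1}^K \wnorm{f_k}^2 \le K \pi \alpha^2 = \pi \alpha^2 \, \mcr(\mathcal{A}).
\end{equation*}

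To estimate a single $\wnorm{f_k}$, I would use the concentric rearrangeability of $\mathcal{A}_k$: by definition, there exist translations $T_i$ (for $A_i \in \mathcal{A}_k$) sending the centers $c_i$ to a common point, say the origin, such that the translated annuli $T_i(A_i)$ are pairwise disjoint. Since the original annuli in $\mathcal{A}_k$ are disjoint, I can apply property 2 of Lemma \ref{quasi_properties} iteratively to conclude that
\begin{equation*}
\wnorm{f_k} = \wnorm{\textstyle\sum_{A_i \in \mathcal{A}_k} \vchi_{T_i(A_i)}(x)\, \frac{v_i(T_i^{-1}x)}{|T_i^{-1}x - c_i|}} = \wnorm{\textstyle\sum_{A_i \in \mathcal{A}_k} \vchi_{T_i(A_i)}(x)\, \frac{v_i(T_i^{-1}x)}{|x|}}.
\end{equation*}
The translated annuli $T_i(A_i)$ are disjoint subsets of $\Rn{2}$, so the summand is majorized pointwise by $\alpha/|x|$; by property 1 and property 4 of Lemma \ref{quasi_properties}, this gives $\wnorm{f_k} \le \alpha \sqrt{\pi}$.

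Combining these two bounds gives the claim. The only place that requires care is the translation step: I need the translates to remain pairwise disjoint so that property 2 applies (the statement of property 2 is for two pieces, but it iterates, and disjointness of the translated $T_i(A_i)$ is exactly the content of "concentrically rearranged"). Everything else is a direct assembly of the four properties in Lemma \ref{quasi_properties}.
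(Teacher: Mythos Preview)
Your proof is correct and follows essentially the same approach as the paper: partition $\mathcal{A}$ into $K=\mcr(\mathcal{A})$ concentrically rearrangeable subcollections, use the translation-invariance (property~2) and disjoint-support additivity (property~3) of the quasi-norm to reduce to a single concentric configuration, and then dominate pointwise by $\alpha/|x|$ to invoke properties~1 and~4. The only cosmetic difference is that the paper applies properties~2 and~3 simultaneously at the outset rather than first splitting via property~3 and then translating each $f_k$ separately, but the argument is the same.
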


\begin{proof}
Write $\mcr(\mathcal{A}) = K \in \{1,\dotsc,M\}$ and let $\{ \mathcal{A}_k \}_{k=1}^K$ satisfy  $\mathcal{A} = \cup_{k=1}^K \mathcal{A}_k$   with $\mathcal{A}_i \cap \mathcal{A}_j = \varnothing$ for $i \neq j$ and so that
  each $\mathcal{A}_k \neq \varnothing$  can be concentrically rearranged.  We are free to enumerate $\mathcal{A}_k = \{A_{k,j}\}_{j=1}^{N_k}$ so that
\begin{equation}
 r_{k,1} < s_{k,1} \le r_{k,2} \le s_{k,2} \le \cdots \le r_{k,N_k} \le s_{k,N_k},
\end{equation}
where $r_{k,j}$ and $s_{k,j}$ denote the inner and outer radii
(respectively) of $A_{k,j}$ for $k=1,\dotsc,K$ and $j=1,\dotsc,N_k$.

By performing the concentric rearrangements and employing the second
and third properties of Lemma \ref{quasi_properties}, we see that
\begin{equation}
 \wnorm{f}^2 = \wnorm{\sum_{k=1}^K \sum_{j=1}^{N_k} g_{k,j} }^2 \le \sum_{k=1}^K  \wnorm{\sum_{j=1}^{N_k} g_{k,j} }^2,
\end{equation}
where
\begin{equation}
 g_{k,j}(x) = \vchi_{\{r_{k,j} <\abs{x-c_k}  \le  s_{k,j}\}}(x) \frac{v_{k,j}(x)}{\abs{x-c_k}}
\end{equation}
for points $c_k \in \Rn{2}$ and vector fields with $\abs{v_{k,j}(x)}
\le \alpha < \infty$ (translations of the $v_k$).  By construction, we have that
for each $k$,
\begin{equation}
 \abs{\sum_{j=1}^{N_k} g_{k,j}(x)} \le \frac{\alpha}{\abs{x-c_k}} \text{ for all } x \in \bar{B}(c_k,s_{k,N_k}).
\end{equation}
Then, according to the first and fourth properties of Lemma
\ref{quasi_properties}, for each $k=1,\dotsc,N$ we have that
\begin{equation}
  \wnorm{\sum_{j=1}^{N_k} g_{k,j} }^2 \le \alpha^2 \wnorm{ \abs{\cdot-c_k}^{-1} }^2 \le \alpha^2 \pi.
\end{equation}
Hence,
\begin{equation}
 \wnorm{f}^2 \le \sum_{k=1}^K  \wnorm{\sum_{j=1}^{N_k} g_{k,j} }^2 \le \alpha^2 \sum_{k=1}^K \pi = \alpha^2 \pi K.
\end{equation}

\end{proof}

As a direct corollary, we obtain the main result of this section.

\begin{pro}\label{lorentz_est}
Suppose that $\mathcal{A} = \{A_i\}_{i=1}^M$ is a finite, disjoint collection
of annuli of the form \eqref{annuli_form}, generated by a ball-growth, starting from a disjoint
collection of  $n$ closed balls, as defined in Definition \ref{b_growth_annuli}.  Let $c_i\in \Rn{2}$ denote the center of $A_i$. Let
\begin{equation}\label{form}
 f(x) = \sum_{i=1}^M \vchi_{A_i}(x) \frac{v_i(x)}{\abs{x-c_i}}
\end{equation}
for vector fields $v_i$ satisfying the bounds $\abs{v_i(x)} \le \alpha < \infty$ on $A_i$ for
$i=1,\dotsc,M$.  Then
\begin{equation}\label{le_0}
 \pqnorm{f}{2}{\infty} \le 2\alpha  \sqrt{\pi n}.
\end{equation}
\end{pro}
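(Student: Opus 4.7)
The plan is to observe that this proposition is essentially a straightforward concatenation of three facts already established in the paper, so the proof should be very short: namely Proposition \ref{norm_mcr}, which bounds the $L^{2,\infty}$ quasi-norm of $f$ by $\alpha\sqrt{\pi\,\mcr(\mathcal{A})}$; Proposition \ref{ball_growth_mcr}, which bounds $\mcr(\mathcal{A})$ by the number $n$ of initial balls in the ball-growth; and the norm/quasi-norm comparison \eqref{normed}, which converts the quasi-norm estimate into a norm estimate at the cost of a factor of $2$.

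The first step is to note that $\mathcal{A}$ satisfies the hypothesis of Proposition \ref{norm_mcr}: it is a finite disjoint collection of annuli of the form \eqref{annuli_form}, with $c_i$ the centers, and the vector field $f$ is written exactly in the form required there with $|v_i|\le \alpha$. Therefore we immediately obtain
\begin{equation*}
\wnorm{f} \le \alpha \sqrt{\pi\, \mcr(\mathcal{A})}.
\end{equation*}
The second step invokes Proposition \ref{ball_growth_mcr}: since by hypothesis $\mathcal{A}$ is generated by a ball-growth procedure starting from a disjoint collection of $n$ closed balls (in the sense of Definition \ref{b_growth_annuli}), we have $\mcr(\mathcal{A}) \le n$. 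Combining this with the previous display yields $\wnorm{f} \le \alpha\sqrt{\pi n}$.

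The final step is to pass from the quasi-norm to the norm via \eqref{normed}, which gives $\pqnorm{f}{2}{\infty} \le 2\wnorm{f} \le 2\alpha\sqrt{\pi n}$, which is exactly \eqref{le_0}. There is no real obstacle here; the entire work has already been done in Propositions \ref{norm_mcr} and \ref{ball_growth_mcr}, and the present statement is simply packaging their combination into the form that will be used in Section \ref{sec3} for the ball-construction argument. The only care needed is to verify that the hypotheses of the two previous propositions are indeed those of the present one, which is immediate from Definition \ref{b_growth_annuli}.
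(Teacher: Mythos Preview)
Your proof is correct and follows exactly the same approach as the paper: apply Proposition~\ref{norm_mcr} and Proposition~\ref{ball_growth_mcr} to obtain $\wnorm{f}\le \alpha\sqrt{\pi n}$, then use \eqref{normed} to pass to the norm.
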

\begin{proof}
Propositions \ref{ball_growth_mcr} and \ref{norm_mcr} imply that
$\wnorm{f} \le \alpha \sqrt{\pi n}$.  The estimate \eqref{le_0} follows
from this and the  estimate \eqref{normed}.
\end{proof}

\section{Improved ball construction estimates}\label{sec3}

In this section, we return to the ball construction \`a la Jerrard and Sandier \cite{je,sa} that was introduced in the context of \eqref{eqj} in \cite{ss1}, Section 4. We incorporate a term in $G$ as in \cite{st}.

\begin{pro}\label{boulesren} Assume that \eqref{curljeq} holds in the sense of distributions in some open set $U$,  where $m$ satisfies \eqref{density}.  Further assume that $j\in L^2_\loc(U\sm\Lambda)$.  Write $n = \#\Lambda$ and
\begin{equation*}
 \eta_0 =  \hal \min\{ \abs{p-q} \;\vert\; p,q \in \Lambda, p\neq q  \}>0.
\end{equation*}
There exists a family of finite collections of disjoint closed balls $\{\B_r\}_{r\in(0,1]}$ and a vector field $G:\cup_{B \in \mathcal{B}_1} B \to \Rn{2}$ such that the following hold.
\begin{enumerate}

\item For each $r \in (0,1]$, $\mathcal{B}_r$ covers $\Lambda$ and has total radius $r$, i.e. $r = \sum_{B \in \mathcal{B}_r} r(B)$, where $r(B)$ denotes the radius of the ball $B$.  The set $\cup_{B \in \mathcal{B}_r} B$ is increasing as a function of $r$.  Moreover, if $r \le  n \eta_0$, then $\B_r = \{\bar{B}(p,\frac{r}{n})\}_{p\in\Lambda}$.

\item For any $0< \eta < \min\{\eta_0,r/n\}$, $\mathcal{B}_r$ may be viewed as having been generated by the ball-growth procedure of Lemma \ref{ball_growth}, starting from the initial collection  $\mathcal{B}_0 = \{ \bar{B}(p,\eta)\}_{p \in \Lambda}$.

\item For any $0< \eta < \min\{\eta_0,r/n\}$, let $\mathcal{A}(\eta,r)$ denote the finite, disjoint collection of annuli generated from $\{\mathcal{B}(t)\}_{t \in [n \eta,r]}$ according to Definition \ref{b_growth_annuli}.  Then, when restricted to the set $\cup_{B \in \mathcal{B}_r} B$, the vector field $G$ is
\begin{equation*}
 G(x) = \sum_{A \in \mathcal{A}(\eta,r)} \vchi_{A}(x) \frac{(x-c_A)^\bot}{\abs{x-c_A}^2} + \sum_{p \in \Lambda} \vchi_{\bar{B}(p,\eta)}(x)\frac{(x-p)^\bot}{\abs{x-p}^2},
\end{equation*}
where $c_A \in \Rn{2}$ is the center of the annulus $A$ and $x^\bot = (x_2,-x_1)$ for $x \in \Rn{2}$.

\item  For every $0< \eta < \min\{\eta_0,r/n\}$ and every $B \in \B_r$ such that $B \subset U$, we have  (writing $n_B= \#(\Lambda\cap B)$)
\begin{equation}\label{br_01}
\hal \int_{B\sm\cup_{p\in\Lambda}\bar{B}(p,\eta)} |j|^2\ge \pi n_B \left( \log\frac r{n\eta} - M r \right) +   \hal \int_{B\sm\cup_{p\in\Lambda}\bar{B}(p,\eta)} |j- G|^2.
\end{equation}

\item For any $\beta>0$ there exists $C_\beta>0$ such that the following holds: if $0< \eta < \min\{\eta_0,r/n\}$, $B\in\B_r$,  and  $\chi$ is a non-negative function with support in  $B\cap U$, then
\begin{multline}\label{br_02}
\int_{B\sm\cup_{p\in\Lambda}\bar{B}(p,\eta)} \chi |j|^2 - 2\pi\(\log\frac r{n\eta} - M r\)\sum_{p\in B\cap\Lambda} \chi(p)\\ \ge   \frac{1}{1+\beta} \int_{B\sm\cup_{p\in\Lambda}\bar{B}(p,\eta)} \chi |j- G|^2 - C_\beta r \nu(B) \pnorm{\nab\chi}{\infty}.
\end{multline}
\end{enumerate}
\end{pro}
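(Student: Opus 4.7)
The plan is to follow the classical Sandier--Jerrard ball-construction strategy (cf.\ Section~4 of~\cite{ss1}) and enrich it by subtracting an explicit field $G$ in the spirit of~\cite{st}. For $0<\eta<\min\{\eta_0,r/n\}$ the balls $\{\bar{B}(p,\eta)\}_{p\in\Lambda}$ are pairwise disjoint by the definition of $\eta_0$, so I apply Lemma~\ref{ball_growth} with $r_0=n\eta$ and grow up to total radius $r$, letting $\mathcal{B}_r=\mathcal{B}(r)$. The concatenation principle (Remark~\ref{ball_growth_remark}) yields both the monotonicity of $\{\mathcal{B}_r\}_{r\in(0,1]}$ and the independence of $\mathcal{B}_r$ from the starting $\eta$ below the threshold. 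For $r\le n\eta_0$ no merging has occurred and $\mathcal{B}_r=\{\bar{B}(p,r/n)\}_{p\in\Lambda}$. The annuli $\mathcal{A}(\eta,r)$ and $G$ are then defined as in Definition~\ref{b_growth_annuli} and the statement.

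\textbf{Single-annulus identity.} Fix $B\in\mathcal{B}_r$ with $B\subset U$ and an annulus $A=\{r_A<|x-c_A|\le s_A\}\subset B$ in $\mathcal{A}(\eta,r)$, containing $n_A=\nu(\bar{B}(c_A,r_A))$ vortices in its inner ball. Since $A\cap\Lambda=\varnothing$, we have $\curl j=-m$ on $A$ and $\oint_{\partial B(c_A,t)} j\cdot\tau = 2\pi n_A - m(B(c_A,t))$ for $t\in(r_A,s_A)$. Expanding $|j|^2=|j-G|^2+2j\cdot G-|G|^2$, a polar-coordinate computation gives $\int_A|G|^2=2\pi\log(s_A/r_A)$, while writing $G=\nabla^\perp\log|\cdot-c_A|$ and integrating by parts on $A$ yields
\begin{equation*}
\int_A j\cdot G = 2\pi n_A\log(s_A/r_A) + E_m(A),
\end{equation*}
with $|E_m(A)|\le CM(s_A-r_A)(n_A+|\log r_A|)$ by~\eqref{density}. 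Consequently,
\begin{equation*}
\int_A(2j\cdot G-|G|^2)\ \ge\ 2\pi(2n_A-1)\log(s_A/r_A)\ -\ Cn_A M(s_A-r_A).
\end{equation*}

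\textbf{Telescoping over $B$.} Summing this single-annulus identity over the annuli of $\mathcal{A}(\eta,r)$ contained in $B$, I exploit the key structural fact that on each growth step $[t_i,t_{i+1}]$ all annuli share the common ratio $t_{i+1}/t_i$ and $\sum_{A\subset B,\ \text{step }i}n_A=n_B$. Thus the principal term telescopes to $\sum_{A\subset B}n_A\log(s_A/r_A)=n_B\log(r/(n\eta))$, and the residual $-\sum_{A\subset B}\log(s_A/r_A)=-\sum_i K_i\log(t_{i+1}/t_i)$ coming from the $(2n_A-1)$ factor is dominated by it since $K_i\le n_B$ at each step. The cumulative $m$-error is at most $Cn_B M\sum_A(s_A-r_A)\le Cn_B M r$. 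Reassembling via $|j|^2=|j-G|^2+2j\cdot G-|G|^2$ and halving yields~\eqref{br_01}.

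\textbf{Weighted version and main obstacle.} For~\eqref{br_02}, the same slicing argument is repeated with the weight $\chi$: replacing $\chi$ by its mean on each circle $\partial B(c_A,t)$ costs at most $2t\|\nabla\chi\|_\infty$, which after summation contributes an error $O(\nu(B)r\|\nabla\chi\|_\infty)$. The prefactor $(1+\beta)^{-1}$ in front of $\int\chi|j-G|^2$ then arises from a single application of Young's inequality $2ab\le\beta a^2+b^2/\beta$ to the weighted cross terms that appear when separating $|j|^2$ from $|j-G|^2$ under the weight $\chi$; this simultaneously fixes the constant $C_\beta$. \emph{The main obstacle} throughout is the bookkeeping of the measure $m$: since~\eqref{density} only provides a linear-in-radius bound, every expansion must be arranged so that only linear-in-$m$ errors appear, quadratic ones being uncontrollable. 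A parallel care is required for the $\chi$-oscillation when passing from~\eqref{br_01} to~\eqref{br_02}, so that the Young step is done exactly once and at the right place.
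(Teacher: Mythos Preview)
Your treatment of items 1--4 is essentially correct and equivalent to the paper's: expanding $|j|^2=|j-G|^2+2j\cdot G-|G|^2$ annulus by annulus is just a repackaging of the paper's circle-by-circle identity integrated through the $\F$ functional. (Your bound $|E_m(A)|\le CM(s_A-r_A)(n_A+|\log r_A|)$ is looser than necessary --- the polar computation gives simply $|E_m(A)|\le \pi M(s_A-r_A)$ --- but this does not affect \eqref{br_01}.)

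The gap is in item 5. Your plan ``replace $\chi$ by its circular mean, then absorb the remainder by one Young inequality'' does not close. On each circle $C_t\subset A$ the identity
\[
\int_{C_t}\chi|j|^2-\int_{C_t}\chi|j-G|^2=\frac{2}{t}\int_{C_t}\chi\, j\cdot\tau-\frac{2\pi\bar\chi_t}{t}
\]
forces you to control $\frac{2}{t}\int_{C_t}(\chi-\bar\chi_t)\,j\cdot\tau$. After writing $j\cdot\tau=(j-G)\cdot\tau+1/t$ the residual is $\frac{2}{t}\int_{C_t}(\chi-\bar\chi_t)(j-G)\cdot\tau$, and the only available bound is $\lesssim\|\nabla\chi\|_\infty\int_{C_t}|j-G|$. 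Absorbing this by Young produces either an \emph{unweighted} $\int|j-G|^2$ (which cannot be compared to $\int\chi|j-G|^2$ since $\chi$ may vanish on parts of $B$) or a term in $\|\nabla\chi\|_\infty^2$; neither matches \eqref{br_02}. Your claimed ``summation gives $O(\nu(B)r\|\nabla\chi\|_\infty)$'' is only valid for the oscillation of $\chi$ against the \emph{explicit} log term, not against the $j$-dependent cross term.

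The paper avoids this by a different mechanism: the layer-cake formula $\int_{B_\eta}\chi|j|^2=\int_0^\infty\int_{B_\eta\cap\{\chi>t\}}|j|^2\,dt$. For each level $t$ one finds, for every $p\in\Lambda\cap B$, the largest $s(p,t)$ such that the ball $B_{p,s(p,t)}\in\B_{s(p,t)}$ lies in $\{\chi>t\}$, and applies the \emph{unweighted} estimate \eqref{br_01} on those balls --- with no loss on $\int|j-G|^2$. The Young inequality $|j|^2\ge\frac{1}{1+\beta}|j-G|^2-C_\beta|G|^2$ is invoked only on the complement $\{\chi>t\}\setminus\cup_p B_p^t$, where $|G|^2$ is explicit and its integral is bounded by $\sum_p\log(r/s(p,t))$. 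The bound $\chi(p)-t\le 2s(p,t)\|\nabla\chi\|_\infty$ then turns both the deficit in the log term and the $|G|^2$ error, after integration in $t$, into exactly $C_\beta r\nu(B)\|\nabla\chi\|_\infty$, while $\int_0^{\chi(p)}dt$ produces the weights $\chi(p)$. This localization of Young to a region where $G$ (not $j$) is controlled is the missing idea in your sketch.
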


\begin{proof}

The proof is an adaptation of Proposition 4.5 of \cite{ss1}, improved as in \cite{st}.  We proceed through several steps.

{\it Step 1:}  Ball growth

In order to define $\mathcal{B}_r$, we first fix a reference family of balls produced via a ball-growth.  Set $\eta_1 = \min\{\eta_0/2,1/(n+1)\}$ and let $\mathcal{B}_0  = \{ \bar{B}(p,\eta_1)\}_{p \in \Lambda}$.  According to the definition of $\eta_0$, we have that $\mathcal{B}_0$ is a finite, disjoint collection of closed balls of total radius $n \eta_1 < 1$. We apply Lemma \ref{ball_growth} to $\mathcal{B}_0$  to produce the family of collections $\{ \mathcal{B}(t) \}_{t \in [n \eta_1,1]}$, satisfying the conclusions of the lemma.

Now we extend this reference family ``backward'' to radii smaller than $n \eta_1$.  For any $0 < r_0 \le t \le n \eta_1$ we write  $\B(t) = \{\bar{B}(p,t)\}_{p\in \Lambda}$.  Then since the balls in these collections never become tangent, we may trivially view $\{\B(t)\}_{t\in [r_0,n \eta_1]}$ as having been generated by a ball-growth, i.e. all of the conclusions of Lemma \ref{ball_growth} apply to this family.

According to Remark \ref{ball_growth_remark}, we may then combine our reference family with the new one to produce $\{\B(t)\}_{t \in [r_0, 1]}$ for any $0 < r_0 <1$.    We now set $\B_r = \B(r)$ by choosing any $0<r_0< r \le 1$.  This proves the first item.  The second item follows by taking $r_0 = n\eta$.

{\it Step 2:}  Defining $G$

Suppose that $0< \eta < \min\{\eta_0,r/n\}$ and let $\mathcal{A}(\eta,r) = \{ A_i\}_{i=1}^M$ be the collection of disjoint annuli of the form \eqref{annuli_form} generated from $\{ \mathcal{B}(t) \}_{t \in [n \eta,r]}$ according to Definition \ref{b_growth_annuli}.  Let $c_i \in \Rn{2}$ denote the center of $A_i$ and define $v_i(x) = (x-c_i)^\bot/\abs{x-c_i}$.  Note that $\abs{v_i(x)}=1$ for $x \in A_i$.  Now we define the restriction of $G(x)$ to the set $\left(\cup_{B \in \mathcal{B}_r} B \right) \backslash \left(\cup_{p \in \Lambda} \bar{B}(p,\eta)\right)$ as the right side of \eqref{form} with this choice of fields $v_i$.  This definition of $G$ is clearly independent of $\eta$ and $r$ in the sense that for different choices of $\eta$ and $r$, the corresponding $G$ vector fields agree on the set where they are both defined.  We may then unambiguously define $G:  \cup_{B \in \mathcal{B}_1} B \to \Rn{2}$ by sending $\eta \to 0$ and then $r \to 1$.    This proves the third item.

{\it Step 3:} Introducing $G$ in the estimates

Since $\curl j = 2\pi \nu -m$,  for any circle $C = \partial B$ of radius $r_B$ not intersecting $\Lambda$, we have,  letting $d_B =  \#(\Lambda\cap B)$ and $\tau$ denote the oriented unit tangent to $C$,
\begin{equation}\label{circle1}
\int_{C} j\cdot\tau =   2\pi \nu(B) - m(B)  = 2\pi d_B- m(B).
\end{equation}
Suppose now that $C \subset A_i\in \mathcal{A}(\eta, r)$ for one of the annuli constructed in the previous step, with $C$ and $A_i$ centered at the same point $a$.  Then by construction, $G(x)=\frac{ \tau(x)}{\abs{x-a}}$  for $x \in C$, where  $\tau(x)$ the unit tangent at $x \in C$.  We then have, in view of \eqref{circle1}, that
\begin{eqnarray}
\nonumber \int_{C}|j-G|^2 & = & \int_{C} |j|^2 + \int_{C}|G|^2 - \frac{2}{r_B}\int_C j \cdot \tau\\
 \nonumber & = & \int_{C} |j|^2 + 2\pi \frac{1}{r_B}  -
  \frac{2}{r_B}(2\pi d_B - m(B) )\\
  \nonumber & = & \int_{C} |j|^2  - 2\pi \frac{2 d_B-1 }{r_B}  + \frac{2  m(B)} {r_B} \\
  \label{circle3}
  & \le  &  \int_{C} |j|^2  - 2\pi \frac{d_B}{r_B}+ \frac{2  m(B)} {r_B}  ,
  \end{eqnarray}
where we have used the fact that $2 d_B -1 \ge d_B$ since $d_B$ is a positive integer. We thus deduce with \eqref{density} that
\begin{equation}\label{circle4}
\int_{C} |j|^2 \ge   \int_{C} |j-G|^2  +  2\pi  \frac{d_B}{r_B}  - 2 \pi  M
\end{equation}
for every concentric circle $C \subset A_i$ for some annulus $A_i \in \mathcal{A}(\eta, r)$.

{\it Step 4:}  Energy estimates

Define $\F(x,r) = \int_{\bar{B}(x,r)} |j|^2$, where $\bar{B}(x,r)$ is the closed ball centered at $x$ of radius $r$.  If $B = \bar{B}(x,r)$, we may then unambiguously write $\F(B) = \F(x,r)$.  For finite, disjoint collections of closed balls, $\mathcal{B}$, we can then define $\F(\mathcal{B}) := \sum_{B \in \mathcal{B}} \F(B)$.

Let $S \subset (n \eta,r]$ denote the finite set of merging times produced in the ball-growth procedure of Lemma \ref{ball_growth} that generated the family $\{\mathcal{B}(t)\}_{t\in [n\eta,r]}$.  Lemma 2.3 of \cite{st}, which is a variant of Proposition 4.1 of \cite{ssbook}, then implies that for every $B \in \mathcal{B}_r$ so that $B \subset U$, we have that
\begin{multline}\label{br_1}
 \F(B) - \F(\mathcal{B}_0 \cap B) \ge \int_{n \eta}^{r} \sum_{ \bar{B}(x,t) \in  \mathcal{B}(s) \cap B}  \frac{\partial \F}{\partial r}(x,t) ds \\
+ \sum_{s \in S} \F(\mathcal{B}(s) \cap B) - \F(\mathcal{B}(s)\cap B)^-,
\end{multline}
where we understand that $\mathcal{B}(s) \cap B = \{ B' \in \mathcal{B}(s) \;\vert\; B'  \subset B\}$ for $s \in [n\eta,r]$, and where we have written $\F(\mathcal{B}(s)\cap B)^- = \lim_{t \to s^-} \F(\mathcal{B}(s)\cap B)$.  Note that we may rewrite the left side of \eqref{br_1} as
\begin{equation}
 \F(B) - \F(\mathcal{B}_0 \cap B) = \int_{B\sm\cup_{p\in\Lambda} B(p,\eta)} \abs{j}^2
\end{equation}
for each $B \in \mathcal{B}_r$ so that $B \subset U$.

The estimate \eqref{circle4} implies that if $B' = \bar{B}(x,t) \in  \mathcal{B}(s) \cap B$, then
 (using again that $d_{B'}$ is a positive integer)
\begin{equation}
\frac{\partial \F}{\partial r}(x,t) \ge   \int_{\partial B'} |j-G|^2+  2\pi d_{B'} \(\frac1{t} -M\).
\end{equation}
We may compute
\begin{equation}
 \int_{n\eta}^{r} \sum_{\bar{B}(x,t) \in  \mathcal{B}(s) \cap B}   \int_{\partial \bar{B}(x,t)} \abs{j-G}^2   ds
= \int_{\cup_{A \in \mathcal{A}(\eta, r)} A } \abs{j-G}^2.
\end{equation}
On the other hand, it is easy to see that
\begin{multline}\label{br_2}
 \sum_{s \in S} \F(\mathcal{B}(s) \cap B) - \F(\mathcal{B}(s)\cap B)^-
= \sum_{s \in S} \left[\sum_{B' \in \mathcal{B}(s) \cap B} \int_{B'} \abs{j}^2 - \lim_{t \to s^-} \sum_{B' \in \mathcal{B}(t) \cap B} \int_{B'} \abs{j}^2 \right]\\
= \int_{B \backslash \left[\left(\cup_{p \in \Lambda} \bar{B}(p,\eta)\right)  \cup \left( \cup_{A \in \mathcal{A}(\eta,r)} A \right)\right] } \abs{j}^2 = \int_{B \backslash \left[\left(\cup_{p \in \Lambda} \bar{B}(p,\eta)\right)  \cup \left( \cup_{A \in \mathcal{A}(\eta, r)} A \right)\right] } \abs{j- G}^2
\end{multline}
since  by construction $G =0$ on ${B \backslash \cup_{A \in \mathcal{A}(\eta, r)} A }$.  Combining \eqref{br_1}--\eqref{br_2} and writing $r_{B'}$ for the radius of a ball $B'$, we then deduce that
\begin{equation}\label{br_3}
\int_{B\sm\cup_{p\in\Lambda}\bar{B}(p,\eta)}|j|^2 \ge \int_{B\sm\cup_{p\in\Lambda}\bar{B}(p,\eta)} |j-G|^2
+   \int_{n\eta}^r \sum_{ B'\in \B(s) \cap B}  2\pi d_{B'} \left( \frac{1}{r_{B'}} - M   \right) \,ds.
\end{equation}
Since
\begin{equation*}
\sum_{B' \in \B(s)\cap B} r_{B'} \le  \sum_{B' \in \B(s)} r_{B'} =s \text{ and }  \sum_{B' \in \B(s)\cap B} d_{B'} =  \#(B\cap\Lambda) = n_B,
\end{equation*}
we may estimate
\begin{multline}\label{br_4}
 \int_{n\eta}^{r} \sum_{ B'\in \B(s) \cap B}  2\pi d_{B'} \left( \frac{1}{r_{B'}} - M   \right) \,ds \ge
\int_{n\eta}^{r} \sum_{ B'\in \B(s) \cap B}  2\pi d_{B'} \left( \frac{1}{s} - M   \right) \,ds \\
=
2\pi n_B \int_{n\eta}^{r}   \left( \frac{1}{s} - M   \right) \,ds
=  2\pi n_B \(\log\frac r{n\eta} - M(r- n\eta)\).
\end{multline}
We may then use \eqref{br_4} in \eqref{br_3} to deduce the estimate \eqref{br_01}, which proves the fourth item.

{\it Step 5:}  Proof of the fifth item

Let $B \in \mathcal{B}_r$ and assume that $0 < \eta < \min\{\eta_0,r/n\}$.   Set $B_\eta = B\sm\cup_{p\in\Lambda} \bar{B}(p,\eta)$. Then by the ``layer-cake'' theorem (see Theorem 1.13 of \cite{lieb_loss}), for any continuous non-negative $\chi$,
\begin{equation}\label{machin}
\int_{B_\eta} \chi |j|^2  = \int_0^{+\infty} \(\int_{B_\eta\cap\{\chi>t\}}|j|^2\)\,dt.
\end{equation}
Now, if $p\in\Lambda\cap B$,  then for any $s\in(0,r]$  there exists a closed  ball $B_{p,s}\in\B_s$ containing $p$. For $t>0$ we call
$$ s(p,t) = \sup \{ s \in (0, r], B_{p,s}\subset \{\chi>t\} \}$$
if this set is nonempty, and let $s(p,t) = 0$ otherwise, i.e. if $\chi(p)\le t$. Then for those $p,t$ so that $s(p,t) >0$, we  let $B_p^t = B_{p,s(p,t)}.$  Note that $p$ is not necessarily the center of $B_p^t$, and also that $s(p,t)$ bounds from above the radius of $B_p^t$, but is not necessarily equal to it.

As noted above,   $s(p,t) = 0$ iff $\chi(p) \le t$, while if  $s(p,t)\in(0,r)$ then $B_p^t\not\subset \{\chi>t\}$, otherwise there would exist $s'>s(p,t)$ such that  $B_{p,s'}\subset \{\chi>t\}$, contradicting the definition of $s(p,t)$. Thus, choosing $y$ in $B_p^t\sm \{\chi>t\}$, we have
\begin{equation}\label{boundr}
\chi(p) - t\le \chi(p) - \chi(y)\le 2 s(p,t)\pnorm{\nab\chi}{\infty}.
\end{equation}
Also, for any $t\ge 0$ the collection $\{B_p^t\}_p$, where $p\in\Lambda$ and the $p$'s for
which $s(p,t) = 0$ have been excluded, is disjoint. Indeed if  $p,b\in \Lambda$ and $s(p,t)\ge s(b,t)$ then, since $\B_{s(p,t)}$ is disjoint, the balls $B_{p,s(p,t)}$ and
$B_{b,s(p,t)}$ are either equal or disjoint. If they are disjoint, we
note that $s(p,t)\ge s(b,t)$ implies that $B_{b, s(b,t)}\subset
B_{b,s(p,t)}$, and therefore $B_b^t = B_{b,s(b,t)}$ and $B_p^t =
B_{p,s(p,t)}$ are disjoint. If they are equal, then
$B_{b,s(p,t)}\subset E_t\cap B$, and therefore $s(b,t)\ge s(p,t)$,
which implies $s(b,t) = s(p,t)$ and then $B_b^t = B_p^t$.

Now assume that  $B'\in \{B_p^t\}_p$, write $n = \#\Lambda$, and let $s$ be the common value of $s(p,t)$ for $p$'s in $B'\cap\Lambda$.  Then the fourth item of the proposition yields for any  $\eta<\min(\eta_0,r/n)$  (but the inequality is trivially true if $\eta>r/n$),
\begin{equation}
\int_{B'\sm\cup_{p\in\Lambda}\bar{B}(p,\eta)} |j|^2\ge \nu(B')\(\log\frac s{n\eta} - M s \)_+  + \int_{B'\sm\cup_{p\in\Lambda}\bar{B}(p,\eta)} |j- G|^2.
\end{equation}
We may rewrite the above as
\begin{equation} \\
\int_{B'\sm\cup_{p\in\Lambda}\bar{B}(p,\eta)} |j|^2\ge 2\pi\sum_{p\in B'\cap\Lambda}\(\log\frac {s(p,t)}{n\eta} - M s(p,t) \)_+ + \int_{B'\sm\cup_{p\in\Lambda}\bar{B}(p,\eta)} |j- G|^2.
\end{equation}
Summing over $B'\in \{B_p^t\}_p$ , we deduce (since the $p$'s for which $s(p,t)=0$ do not contribute to the sum)
\begin{multline}\label{p11}
\int_{  \left(\cup_{p\in B\cap \Lambda } B_p^t  \right) \sm \left(\cup_{p \in\Lambda}\bar{B}(p,\eta)\right)   } |j|^2\ge 2\pi\sum_{p\in B\cap\Lambda}\(\log\frac {s(p,t)}{n\eta} - M s(p,t) \)_+    \\ +
\sum_{p\in B\cap\Lambda}  \int_{     B_p^t \sm\cup_{p\in\Lambda} \bar{B}(p,\eta)} |j- G|^2.
\end{multline}
On the other hand, by simple algebra, for any $\beta>0$ there exists $C_\beta>0$ such that
\begin{multline}
\int_{   B_\eta\cap  \{\chi > t\} \sm \( \cup_{p\in B\cap \Lambda } B_p^t\)}  |j|^2 \\
\ge
\frac{1}{1+\beta} \int_{   B_\eta\cap  \{\chi > t\} \sm \( \cup_{p\in B\cap \Lambda}  B_p^t\)}  |j-G|^2 - C_\beta \int_{   B_\eta\cap  \{\chi > t\} \sm \( \cup_{p\in B\cap \Lambda }B_p^t\)}  |G|^2.
\end{multline}
Adding this  to \eqref{p11}, we are led to
\begin{multline}\label{p12}
\int_{   B_\eta\cap  \{\chi > t\}    } |j|^2\ge 2\pi\sum_{p\in B\cap\Lambda}\(\log\frac {s(p,t)}{n\eta} - M s(p,t) \)_+   \\ + \frac{1}{1+\beta}
\int_{   B_\eta\cap  \{\chi > t\}    } |j-G|^2 - C_\beta
\int_{   B_\eta\cap  \{\chi > t\} \sm \( \cup_{p\in B\cap \Lambda } B_p^t \)} |G|^2.
\end{multline}

We now turn to estimating
\begin{equation*}
\int_{ B_\eta \cap  \{\chi > t\} \sm \( \cup_{p\in B\cap \Lambda } B_p^t \) } |G|^2.
\end{equation*}
By the definition of $G$, we may rearrange the annuli on which $G$ is supported to arrive at the estimate
\begin{multline}
\int_{   B_\eta\cap  \{\chi > t\} \sm \( \cup_{p\in B\cap \Lambda } B_p^t \)} |G|^2
\le  \sum_{\substack{p \in B\cap \Lambda\\ s(p,t)>0}} \int_{B[p,r]\backslash B[p, s(p,t)]} \frac{dx}{|x-p|^2} \\
=  \sum_{\substack{p\in B\cap \Lambda\\ s(p,t)>0}} \log \frac{r}{s(p,t)}
= \sum_{\substack{p\in B\cap \Lambda\\ s(p,t)>0 }} \log \left(\frac{r}{s(p,t)} \vee 1\right).
\end{multline}
Inserting \eqref{boundr}, we obtain
\begin{equation}
\int_{   B_\eta\cap  \{\chi > t\} \sm \( \cup_{p\in B\cap \Lambda}  B_p^t \)} |G|^2
\le \sum_{\substack{p\in B\cap \Lambda\\ s(p,t)>0 }}
  \log \( \frac{2r \pnorm{\nab \chi }{\infty}  }{(\chi(p) -t)_+}\vee 1 \).
\end{equation}
We now integrate this over $t$, which yields
\begin{multline}\label{p112}
\int_0^\infty \int_{   B_\eta\cap  \{\chi > t\} \sm \( \cup_{p\in B\cap \Lambda } B_p^t \)} |G|^2 \, dt
\le  \sum_{p \in B\cap \Lambda } \int_{\chi(p) - 2r \pnorm{ \nab \chi }{\infty} }^{\chi(p)}\log
 \frac{2r \pnorm{\nab \chi }{\infty}  }{\chi(p) -t}\, dt  \\
= \sum_{p \in B\cap \Lambda } 2 r \pnorm{\nab \chi}{\infty},
\end{multline}
where for the last equality we have used the change of variables $v=\frac{\chi(p)-t}{2r \pnorm{\nab \chi}{\infty}}$.

Similarly,  using \eqref{boundr} and the fact that $s(p,t)\le r$, we have
\begin{multline}\label{p113}
\int_0^\infty
2\pi\sum_{p\in B\cap\Lambda}\(\log\frac {s(p,t)}{n\eta} - M s(p,t) \)_+\, dt
 \\ \ge
2\pi\sum_{p\in B\cap\Lambda}\int_0^{\chi(p)} \(\log\frac r{n\eta} +\log \(\frac{\chi(p) - t}{2r \pnorm{\nab\chi}{\infty}} \wedge 1\) - M r  \)\,dt \\ \ge
2\pi\sum_{p\in B\cap\Lambda}\(\chi(p)\(\log\frac r{n\eta} -Mr \) - 2r \pnorm{ \nab\chi}{\infty} \).\end{multline}
Integrating \eqref{p12} with respect to $t$ and combining with \eqref{machin},  \eqref{p112}, and \eqref{p113} (modifying $C_\beta $ if necessary), we are led to
\begin{equation*}
\int_{B_\eta}  \chi |j|^2 \ge   2\pi\sum_{p\in B\cap\Lambda}\chi(p)\(\log\frac r{n\eta} -Mr \)  - C_\beta r \pnorm{\nab \chi}{\infty} \nu (B) + \frac{1}{1+\beta} \int_{B_\eta} \chi |j-G|^2,
\end{equation*}
which is \eqref{br_02}.  This proves the fifth item and completes the proof.
\end{proof}

We are now in a position to finish the
\begin{proof}[Proof of Theorem \ref{main}]

We proceed through several steps.

{\it Step 1:} Localizing the ball construction

The first step is to use a covering to localize the ball construction estimates of Proposition \ref{boulesren}.  Our method follows that of Proposition 4.8 of \cite{ss1}, which was based on the method used in \cite{compagnon}.  We cover $\mr^2$ by the  balls of radius $1/4$ whose centers are in $\frac{1}{8} \mz^2$.  We call this cover  $\{U_\alpha\}_\alpha$ with $\{x_\alpha\}_\alpha$ the centers.  For each $\alpha$ so that $U_\alpha\cap\widehat U\neq \varnothing$ and for any $r\in(0,1/4)$  we construct disjoint    balls $\B_r^\alpha$ and vector fields $G_r^\alpha: \cup_{B \in \B_r^\alpha} B \to \Rn{2}$ using Proposition \ref{boulesren}  (here we view the $G_r^\alpha$ as the restrictions to $\cup_{B \in \B_r^\alpha} B$ of the vector fields constructed in  the proposition).

Assume that  $\rho\in(0,1/4)$ (with value to be specified below).  We claim that for each $\alpha$ we can extract a subcollection $\tilde{\B}_\rho^\alpha \subseteq \B_\rho^\alpha$ so that $\mathcal{B}_\rho := \cup_\alpha \tilde{\B}_\rho^\alpha$ is a disjoint  cover of $\Lambda$.   To prove the claim, we first note that if  $\mathcal{C}$ is a connected component of $\cup_\alpha  \B_\rho^\alpha$,  then there exists  $\alpha_0$ such that $\mathcal{C}\subset U_{\alpha_0}$ (for the proof, see \cite{ss1} Proposition 4.8).  Then, to obtain a disjoint cover of $\Lambda$ from $\cup_\alpha \B_\rho^\alpha$, we let $\mathcal{C}$ run over all the  connected components of  $\cup_\alpha  \B_\rho^\alpha$, and for a given $\mathcal{C}$ such that $\mathcal{C}\subset U_{\alpha_0}$,  we remove from $\mathcal{C}$ the balls which do not belong to $\B_\rho^{\alpha_0}$.   We let $\tilde{\B}_\rho^\alpha$ to denote the family with deleted balls, and let $\B_\rho =\cup_\alpha \tilde{\B}^\alpha_\rho$. Then $\B_\rho$ covers $\Lambda$ and is disjoint.  Also, each ball in $\tilde{\B}_\rho^\alpha$ is contained in $U_\alpha$.   This proves the claim.

{\it Step 2:}  Introducing $G$

Let us write $\eta_0 = \hal \min\{\abs{p-q} \;\vert\; p,q \in \Lambda, p \neq q  \}$ and $n_\alpha = \nu(U_\alpha) \le \nu(\widehat{U}) = n$.  We set
\begin{equation*}
\gamma_\rho := \min\{\eta_0,\rho/n \} \le \min\{ \eta_0, \rho/n_\alpha \}.
\end{equation*}
According to Proposition \ref{boulesren}, if $0 < \eta \le \gamma_\rho$, then we can view $\B_\rho^\alpha$ as having been generated by a ball-growth, starting with $\{\bar{B}(p,\eta)\}_{p \in \Lambda \cap U_\alpha}$,  via the family $\{\B^\alpha(t)\}_{t \in [n_\alpha \eta, \rho]}$.  For any  $0<\eta\le \gamma_\rho$ we then write $\mathcal{A}^\alpha(\eta,\rho)$ for the collection of disjoint annuli generated from $\{\B^\alpha(t)\}_{t \in [n_\alpha \eta, \rho]}$ according to Definition \ref{b_growth_annuli}.   Note that the construction of $G^\alpha_\rho$ guarantees that for $x \in \left( \cup_{B \in \B_\rho^\alpha} B \right)$,
\begin{equation*}
 G_\rho^\alpha(x) = \sum_{A \in \mathcal{A}^\alpha(\eta,r)} \vchi_{A}(x) \frac{(x-c_A)^\bot}{\abs{x-c_A}^2} + \sum_{p \in \Lambda \cap U_\alpha} \vchi_{\bar{B}(p,\eta)}(x)\frac{(x-p)^\bot}{\abs{x-p}^2},
\end{equation*}
where $c_A \in \Rn{2}$ denotes the center of the annulus $A$.

Now we let  $\tilde{\mathcal{A}}^\alpha(\eta,\rho)$ denote the  collection of  annuli in $\mathcal{A}^\alpha(\eta,\rho)$ that are contained in one of the balls in $\tilde{\B}^\alpha_\rho$.  Then  $\mathcal{A}(\eta,\rho) := \cup_\alpha \tilde{\mathcal{A}}_\rho^\alpha$  is a finite,  disjoint collection of annuli, each of which is contained in  $\left(\cup_{B \in \B_\rho}B \right) \backslash \cup_{p \in \Lambda} \bar{B}(p,\eta) $.  We now define $G : \left(\cup_{B \in \B_\rho} B \right)\to \Rn{2}$ by
\begin{equation*}
 G(x) = \sum_{\alpha}   \sum_{A \in \tilde{\mathcal{A}}^\alpha(\eta,\rho)} \vchi_{A}(x) \frac{(x-c_A)^\bot}{\abs{x-c_A}^2}   + \sum_{p \in \Lambda} \vchi_{\bar{B}(p,\eta)}(x)\frac{(x-p)^\bot}{\abs{x-p}^2}.
\end{equation*}
We then extend $G$ by $0$ on $\widehat{U} \backslash \left(\cup_{B \in \B_\rho} B \right)$ to view $G: \widehat{U} \to \Rn{2}$.  Clearly, $G(x) = G_\rho^\alpha(x)$ for all $x \in \cup_{B \in \tilde{\B}_\rho^\alpha} B$.

It is clear that $\sum_{\a}n_\a\le C_* n$ where $C_*< \infty$ is the overlap number of the $U_\a$'s, defined as the maximum number of sets to which any $x$ belongs.  We will use this fact to estimate  $\|G\|_{\lti}$.    We combine the first, third, and fourth  items of Lemma \ref{quasi_properties}, the estimate \eqref{normed},  and Proposition  \ref{lorentz_est} to see that
\begin{equation}
\begin{split}
 \hal \pqnorm{G}{2}{\infty}^2 & \le \wnorm{G}^2 \\
& \le \sum_{\alpha} \wnorm{ \sum_{A \in \tilde{\mathcal{A}}_\rho^\alpha} \vchi_{A}(\cdot) \frac{(\cdot-c_A)^\bot}{\abs{\cdot-c_A}^2} }^2
+ \sum_{p \in \Lambda} \wnorm{\vchi_{\bar{B}(p,\eta)}(\cdot) \frac{(\cdot-p)^\bot}{\abs{\cdot-p}^2}}^2  \\
&\le \sum_{\alpha} \wnorm{ \sum_{A \in \mathcal{A}_\rho^\alpha} \vchi_{A}(\cdot) \frac{(\cdot-c_A)^\bot}{\abs{\cdot-c_A}^2} }^2
+ \sum_{p \in \Lambda} \wnorm{ \frac{1}{\abs{\cdot-p}}}^2
\\
&\le \sum_{\alpha} \pqnorm{ \sum_{A \in \mathcal{A}_\rho^\alpha} \vchi_{A}(\cdot) \frac{(\cdot-c_A)^\bot}{\abs{\cdot-c_A}^2} }{2}{\infty}^2
+ \sum_{p \in \Lambda} \pi
\\
&\le \sum_\alpha 4 \pi n_\alpha  +  n \pi \le 4\pi C_* n + n\pi = \pi(4C_* +1) n.
\end{split}
\end{equation}
Hence $\pqnorm{G}{2}{\infty}^2 \le C n$, which  proves the first item of the theorem.

{\it Step 3:} Preliminaries for the main estimate

We now turn to the proof of the main estimate, \eqref{jW}.  The last  item of Proposition \ref{boulesren}, applied to a ball $B \in \tilde{\B}_\rho^\alpha$, guarantees that if $0<\eta\le \gamma_\rho$   and $B_\eta := B\sm\cup_{p\in\Lambda}\bar{B}(p,\eta)$, then for any $\beta>0$ and any non-negative function $\chi$ vanishing outside $\widehat{U}$ we have
\begin{multline}
\frac{1}{1+\beta} \int_{B_\eta} \chi |j -G|^2
 \le \int_{B_\eta}\chi |j|^2
 - 2\pi\(\log\frac \rho{n_\alpha\eta} - \rho M\)\sum_{p\in B\cap\Lambda} \chi(p)\\
+ C_\beta    \rho \nu(B) \pnormspace{\nab\chi}{\infty}{B}.
\end{multline}
We restrict to the $\alpha$'s such that  $U_\a$ intersects $\supp (\chi)$ and then sum over $B \in   \B_\rho$;  since  $G$ vanishes outside $\cup_{B \in \B_\rho}B$, we deduce that for $U(\eta):= U \backslash \cup_{p \in \Lambda} \bar{B}(p,\eta)$,
\begin{multline*}\frac{1}{1+\beta}
\int_{U(\eta)} \chi  |j- G|^2 \le C_\beta \rho   \sum_\a  n_\a \pnorm{\nab \chi }{\infty} \\
+ \sum_{B \in \B_\rho} \left[ \int_{B\sm\cup_{p\in\Lambda}\bar{B}(p,\eta)} \chi |j|^2 - 2\pi\log\frac {1}{\eta}
 \sum_{p\in B\cap\Lambda} \chi(p) \right]
 \\
+  \sum_{\{\a \;\vert\; U_\a \cap \supp(\chi)  \neq \varnothing\}}
  n_\a(    2\pi \rho  M + 2\pi \log (2n_\a))   \pnorm{\chi}{\infty}
\end{multline*}
 Letting $\eta\to 0$, in view of the definition of $W(j, \chi)$ \eqref{WR},  we find
\begin{multline}
\limsup_{\eta \to 0}\frac{1}{1+\beta}   \int_{U(\eta)} \chi  |j- G|^2
\le 2 W(j, \chi) +  C_\beta  C_* \rho   n \pnorm{\nab \chi}{\infty}   \\
+ \sum_{\{\a \;\vert\; U_\a \cap \supp(\chi)  \neq \varnothing\}}  n_\a(2  \pi \rho   M + 2\pi \log (2n_\a))   \pnorm{\chi}{\infty}.
\end{multline}
 It follows that $\chi |j-G|^2 \in L^1 (U) $ and, changing the constants if necessary,
 \begin{multline}\label{circle5}
 \hal \int_U  \chi|j- G|^2 \le (1+\beta)  W(j, \chi) +  C_\beta   n \pnorm{\nab \chi}{\infty} \\   +C\sum_{\{\a \;\vert\; U_\a \cap \supp(\chi)  \neq \varnothing\}}   n_\a (1+  \log n_\a )  \pnorm{\chi}{\infty}.
\end{multline}

{\it Step 4:}  Completing the main estimate

This step again  follows \cite{ss1}.  There exists  a number   $k$ which bounds the number of $\beta$'s such that  $\dist(U_\beta,   U_\alpha)< 1/2$ for any given $\alpha$. Therefore, the total radius of the balls in $\B_\rho$ that are at distance less than $1$ from $U_\alpha$ is at most $k\rho$. We may then choose $\rho$ small enough that $k\rho <\frac{1}{16}$.  Then, letting $T_\alpha$ denote the set of $t\in (0,\frac34)$ such that the circle of center $x_\alpha$ (where we recall $x_\alpha$ is the center of $U_\alpha$)  and radius $t$ does not intersect $\tilde{\B}_\rho^\alpha$, we have $|T_\alpha|\ge 3/4 - 1/16= 11/16$. Moreover, if $U_\alpha\cap U\neq\varnothing$ then $d(x_\alpha ,U ) \le 1/4$, and hence $\bar{B}(x_\alpha,3/4)\subset\widehat U$. In particular, letting $C_\alpha = \{x\mid |x-x_\alpha|\in T_\alpha\}$, we have $C_\alpha\subset \widehat U$.   Then  there exist universal constants $c>0$ and $C$ such that
\begin{equation}\label{lacouronne}
\int_{C_\alpha}|j|^2 \ge c {n_\alpha}^2 - CM^2.
\end{equation}
To see this, we apply \eqref{circle1} on the circle $S_t = \{|x-x_\alpha|=t\}$, i.e. with $r_B = t$ and $d_B = \#(\Lambda\cap B(x_\alpha,t))$. Using the fact that $d_B\ge n_\alpha$ and $t\in(0, \frac34)$, as well as the  Cauchy-Schwarz inequality and the relation $(a-b)^2 \ge \frac{a^2}{2}  -b^2$, we deduce that \begin{equation*}
\int_{S_t}|j|^2\ge \frac{(2\pi d_B-\pi m(B) t)^2}{2\pi t} \ge \frac{4\pi }{3} {n_\alpha}^2 - \frac{3\pi}{8}M^2.
\end{equation*}
Integrating this with respect to $t\in T_\alpha$ yields \eqref{lacouronne}.
Note that $G = 0$  in each  $C_\alpha$ by construction, so we may deduce from \eqref{lacouronne}  that $ \int_{C_\alpha}|j-G|^2 \ge c {n_\alpha}^2 - CM^2$.
Finally, modifying $C$, we may change this relation into
\begin{equation}\label{lacouronne2}
\int_{C_\alpha}|j-G|^2 \ge c \( {n_\alpha}^2 - Cn_\alpha\)_+.
\end{equation}
Indeed, if $n_\alpha =0$ the relation is trivially true, and if not then we have $n_\alpha \ge 1$.

Let us write $k'$ for  the overlap number of the sets $\{C_\alpha\}_\alpha$, defined as the maximum number of sets to which any $x$ belongs. It is bounded by the overlap number of $ \{B(x_\alpha,3/4)\}_\alpha$. Since
\begin{equation}
\int_U \chi |j-G|^2 \ge (1-\beta) \int_U \chi |j-G|^2 + \beta \frac{1}{k'} \sum_{\{\a \;\vert\; U_\a \cap \supp(\chi)  \neq \varnothing\}}   \int_{C_\a} \chi |j-G|^2,
\end{equation}
we deduce from \eqref{circle5} and \eqref{lacouronne2} that
 \begin{multline*}
 \frac{\beta c}{2 k'} \sum_{\{\a \;\vert\; \chi \ge \hal \pnorm{\chi}{\infty}   \text{ on } \ U_\a \}}  (\min_{C_\a} \chi)( n_\a^2- C n_\a  )_+ + \frac{1-\beta}{2}\int_U \chi |j-G|^2
 \\ \le
 (1+\beta) W(j, \chi) +  C_\beta    n \pnorm{\nab \chi }{\infty}   +C \sum_{\{\a \;\vert\; U_\a \cap \supp(\chi)  \neq \varnothing\}}   n_\a (1+  \log n_\a )  \pnorm{\chi}{\infty}.
\end{multline*}
If $U_\a  \subset \{\chi \ge \hal \pnorm{\chi}{\infty}\}$  we have that
\begin{equation*}
\frac{\beta c}{ 2 k'}  (\min_{C_\a} \chi)( n_\a^2- C n_\a  )_+ - C n_\a(1+ \log n_\a)\pnorm{\chi}{\infty}  \ge - C_\beta n_\a \pnorm{\chi}{\infty},
\end{equation*}
where $C_\beta$ depends only on $\beta$ and $M$. For the other $\a$'s, $U_\a$ intersects $\supp(\chi) \cap \{\chi\le \hal \pnorm{\chi}{\infty} \}$, and since the diameter of the $U_\a$'s are bounded by  $\hal$ (by construction), we may bound
\begin{equation*}
\sum_{\{\a \;\vert\; U_\a \cap \supp(\chi) \cap \{\chi \le \hal \pnorm{\chi}{\infty} \}  \neq \varnothing\}}   n_\a (1+  \log n_\a ) \le 2 n'\log n',
\end{equation*} 
where $n'= \#\{p\in\Lambda\mid B(p,\hal)\cap \{0< \chi \le \hal \pnorm{\chi}{\infty} \} \neq\varnothing\}$.
We are led to
\begin{equation}
 \frac{1-\beta}{2} \int_U \chi |j-G|^2 \le
 (1+\beta) W(j, \chi) +  C_\beta    n \pnorm{\nab \chi}{\infty} + C_\beta n \pnorm{\chi}{\infty}  +    C n'\log n',
\end{equation}
which yields the estimate \eqref{jW}, after changing $\beta$ into $\beta/2$.
\end{proof}

With Theorem \ref{main} in hand, we now conclude with  the
\begin{proof}[Proof of the Corollary \ref{main_cor}]
Using  the embedding relation $\|f\|_{\lti(U) }\le \|f\|_{L^2(U)}$, we deduce from the second item of Theorem \ref{main}, applied with $\beta=1$, that
\begin{equation}\label{cor1}
\|\sqrt{\chi } (j-G) \|_{\lti (U)} \le C \( W(j, \chi) + n (\pnorm{\chi}{\infty}+\pnorm{\nab \chi}{\infty} )  + C n'\log n'\)^\hal.
\end{equation}
We may then estimate $\sqrt{\chi} G$ by combining the first item of Lemma \ref{quasi_properties}, estimate \eqref{normed}, and the first item of Theorem \ref{main} to see that $\|\sqrt{\chi} G\|_{\lti(U)} \le C (\pnorm{\chi}{\infty} n)^\hal.$   Then from this,  \eqref{cor1}, and the triangle inequality for the $\lti $ norm, we are led to (changing the constants if necessary)
\begin{equation}
\|\sqrt{\chi} j\|_{\lti(U)} \le C \( W(j, \chi) + n (\pnorm{\chi}{\infty}+\pnorm{\nab \chi}{\infty}  )  + C n'\log n'\)^\hal.
\end{equation}
Finally, to conclude the proof we  use the embedding  (see e.g. \cite{grafakos})
\begin{equation*}
\|f\|_{L^p(U)} \le C_p|U|^{\frac{1}{p}-\hal} \|f\|_{\lti(U)}
\end{equation*}
for $1\le p < 2$ and $C_p = (2/(2-p))^{1/p}$, applied to $f=\sqrt{\chi} j$.

\end{proof}

\noindent
{\sc Sylvia Serfaty}\\
UPMC Univ. Paris 06, UMR 7598 Laboratoire Jacques-Louis Lions,\\
 Paris, F-75005 France;\\
 CNRS, UMR 7598 LJLL, Paris, F-75005 France \\
 \&  Courant Institute, New York University\\
251 Mercer St., New York, NY  10012, USA\\
{\tt serfaty@ann.jussieu.fr}
\vskip 1cm
\noindent
{\sc Ian Tice}\\
Brown University, Division of Applied Mathematics\\
 182 George St., Providence, RI 02912, USA \\
{\tt tice@dam.brown.edu}

\end{document}